\def \Re {{\rm Re}}
\def \Im {{\rm Im}}
\def \Tr {{\rm Tr}}
\def \RR {\mathbb R}
\def \EE {\mathbb E}
\def \CC {\mathbb C}
\def \PP {\mathbb P}
\def \eps {\varepsilon}
\def \vphi {\varphi}
\def \cQ {\mathcal Q}
\def \cF {\mathcal F}
\def \cG {\mathcal G}
\newtheorem{theorem}{Theorem}[section]
\newtheorem{lemma}[theorem]{Lemma}
\newtheorem{proposition}[theorem]{Proposition}
\newtheorem{corollary}[theorem]{Corollary}
\newtheorem{remark}[theorem]{Remark}
\def\myffrac#1#2 in #3{\raise 2.6pt\hbox{$#3 #1$}\mkern-1.5mu\raise 0.8pt\hbox{$
#3/$}\mkern-1.1mu\lower 1.5pt\hbox{$#3 #2$}}
\def\qed{\hfill $\vcenter{\hrule height .3mm
\hbox {\vrule width .3mm height 2.1mm \kern 2mm \vrule width .3mm
height 2.1mm} \hrule height .3mm}$ \bigskip}
\begin{document}

\title{Eldan's stochastic localization and tubular neighborhoods of complex-analytic sets}
\author{Bo'az Klartag}
\date{}
\maketitle

\abstract{Let $f: \CC^n \rightarrow \CC^k$ be a holomorphic function and set $Z = f^{-1}(0)$. Assume that $Z$ is non-empty. We prove that for any $r > 0$,
$$ \gamma_n(Z + r) \geq \gamma_n(E + r), $$
where $Z + r$ is the Euclidean $r$-neighborhood of $Z$, where $\gamma_n$ is the standard Gaussian measure in $\CC^n$,
and where $E \subseteq \CC^n$ is an $(n-k)$-dimensional, affine, complex subspace
whose distance from the origin is the same as the distance of $Z$ from the origin. }

\section{Introduction}

Eldan's stochastic localization is a rather recent analytic technique that has emerged from convex geometry.
 In a nutshell, the idea is to construct a time-parameterized
family of decompositions of
a given probability measure on $\RR^n$ into a mixture of increasingly curved measures,
in the sense that their
densities become more log-concave with time.

\smallbreak The technique was used by Eldan \cite{Eldan1} and by Lee and Vempala \cite{LV} in their works on the
KLS conjecture, by Eldan in his works on noise sensitivity \cite{Eldan2} and on Skorokhod embeddings
\cite{Eldan3}, by Eldan and Lehec \cite{Eldan_lehec} in connection with Ball's thin-shell problem and Bourgain's slicing conjecture,
and by Ding, Eldan and Zhai \cite[Theorem 1.6]{eldan_more} for studying the concentration of a supremum of a Gaussian process.

\smallbreak In this paper we apply this technique to the study of complex-analytic subvarieties of $\CC^n$.
Write $\gamma_n$ for the standard Gaussian probability measure in $\CC^n$, whose density is $z \mapsto (2\pi)^{-n} \exp(-|z|^2/2)$,
where $|z| =\sqrt{\sum_j |z^j|^2}$ for $z = (z^1,\ldots,z^n) \in \CC^n$. For a non-empty set $S \subseteq \CC^n$ we write
$$ d(0, S) = \inf_{z \in S} |z| $$
for the Euclidean distance from the origin to the set $S$.

\begin{theorem}
Let $1 \leq k \leq n$, let $f: \CC^n \rightarrow \CC^k$ be a holomorphic map and set $Z = f^{-1}(0)$.
Assume that $Z$ is non-empty. Let $E \subseteq \CC^n$
be an $(n-k)$-dimensional, affine, complex subspace with $d(0, E) = d(0, Z)$. Then,
\begin{equation}  \gamma_n( Z + r ) \geq  \gamma_n( E + r ) \qquad \qquad \text{for all} \ r > 0, \label{eq_511} \end{equation}
where $Z + r = \{ z + w \, ; \, z \in Z, w \in \CC^n, |w| \leq r \}$.
 \label{thm_1048}
\end{theorem}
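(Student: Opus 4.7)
The first step is to unpack the right-hand side of \eqref{eq_511}. By choosing unitary coordinates so that the nearest point of $E$ to the origin lies in the $k$-complex-dimensional orthogonal complement of the linear direction of $E$, $\gamma_n(E+r)$ reduces to $\gamma_k(B_{\CC^k}(p_0,r))$ for some $p_0 \in \CC^k$ with $|p_0|=d(0,Z)$. The task then becomes to prove the lower bound $\gamma_n(Z+r)\ge \gamma_k(B_{\CC^k}(p_0,r))$, and to identify a mechanism that allows the complex-analytic hypothesis on $Z$ to beat the naive dimension count.

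The main tool, as the paper's title indicates, is Eldan's stochastic localization. The plan is to construct a Brownian-adapted martingale $(\mu_t)_{t \ge 0}$ of probability measures on $\CC^n$ with $\mu_0=\gamma_n$ and $\EE\mu_t(A)=\gamma_n(A)$ for every measurable $A$. Because the starting measure is Gaussian and the process is compatible with translations and scalings of Gaussians, the measures $\mu_t$ themselves remain Gaussian, of the form $\mu_t = N(M_t,(1+t)^{-1}\id)$ for a Brownian-driven mean $M_t$. Since $\mu_t$ becomes sharply concentrated near $M_t$ as $t \to \infty$, the martingale identity $\gamma_n(Z+r)=\EE\mu_t(Z+r)$ reduces the global inequality to a one-parameter family of questions about highly concentrated Gaussians, against which the complex-analytic structure of $Z$ can be tested more flexibly than against $\gamma_n$ itself.

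The main obstacle will be the pointwise analysis of $\mu_t(Z+r)$ via the fact that $Z$ is the zero set of a holomorphic map. My plan is to invoke the plurisubharmonicity of $\log|f|$: a unitary-invariant Gaussian on $\CC^n$ obeys the mean-value inequality for plurisubharmonic functions, so integrating $\log|f|$ against $\mu_t$ is controlled by its value at $M_t$, while on any ball disjoint from $Z$ the function $\log|f|$ is pluriharmonic by the minimum-modulus principle and hence rigid. Combining these properties with It\^o calculus applied to a suitable pluriharmonic functional of the mean process should yield a submartingale inequality comparing $\mu_t(Z+r)$ with its affine analogue, and letting $t\to\infty$ should saturate the bound exactly on complex affine subspaces. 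Calibrating this comparison so that it is sharp for every $r>0$, and simultaneously for every complex codimension up to $k$, is where I expect the hardest work to lie.
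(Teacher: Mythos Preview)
Your reduction of the right-hand side to $\gamma_k$ of a ball is fine, and you are right that stochastic localization is the engine. But the specific localization you describe --- an isotropic family $\mu_t=N(M_t,(1+t)^{-1}\id)$ with a freely evolving Brownian-driven mean --- misses the central mechanism of the paper, and your proposed plurisubharmonicity argument does not fill that gap.

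In the paper the diffusion matrix $\Sigma_t$ is \emph{not} a scalar multiple of the identity: it is chosen so that its range is (after a twist by $B_t^{-1/2}$) orthogonal to the holomorphic gradient vectors $\nabla f_1,\ldots,\nabla f_k$ at the current center $a_t$. Since the quadratic covariation $[a^j,a^\ell]_t$ vanishes (it is a complex Brownian martingale), It\^o's formula applied to the holomorphic $f_j$ has no second-order term, and the choice of $\Sigma_t$ kills the first-order term, so $f_j(a_t)\equiv 0$: the center is \emph{confined to $Z$ for all time}. Simultaneously, because $\Sigma_t$ has rank $n-k$, the inverse covariance $B_t$ blows up only in $n-k$ complex directions, and the limit $\mu_\infty$ is a Gaussian supported on a $k$-dimensional complex affine flat through a point of $Z$, more curved than $\gamma_k$. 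The inequality then follows from a single deterministic lemma (Lemma~\ref{lem_1009}) comparing such a Gaussian's mass on a ball about its center to $\gamma_k(D(v,r))$.

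By contrast, your isotropic localization concentrates to a Dirac mass at a $\gamma_n$-distributed point, which only reproduces the tautology $\gamma_n(Z+r)=\PP(M_\infty\in Z+r)$. Your plan to recover the estimate via plurisubharmonicity of $\log|f|$ and a ``submartingale inequality comparing $\mu_t(Z+r)$ with its affine analogue'' is not an argument: you have not said what the submartingale is, how a mean-value inequality for $\log|f|$ translates into a lower bound on $\mu_t(Z+r)$, or why this would be sharp for affine $Z$. The actual place where holomorphicity enters is not a mean-value property of $\log|f|$ against $\mu_t$, but the vanishing of the It\^o correction for $f(a_t)$, which lets one steer the center along $Z$. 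That idea --- an anisotropic, $Z$-tangent drift for the mean process --- is what your proposal is missing.
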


Intuitively, a consequence of Theorem \ref{thm_1048} is that a fiber of a holomorphic map
cannot escape from the origin faster than a flat that has the same distance from the origin. The limit case $r \rightarrow 0$
in (\ref{eq_511}) implies that
$$ \int_Z \frac{e^{-|z|^2/2}}{(2 \pi)^n} \geq \int_E \frac{e^{-|z|^2/2}}{(2 \pi)^n} = \frac{e^{-d(0, Z)^2/2}}{(2 \pi)^k}, $$
where both integrals are carried out with respect to the Hausdorff measure of real dimension $2(n-k)$. By specializing
Theorem \ref{thm_1048} to the case where $0 \in Z$ we obtain the following:

\begin{corollary} Let $1 \leq k \leq n$ and let $f: \CC^n \rightarrow \CC^k$ be a holomorphic map with $f(0) = 0$. Set $Z =f^{-1}(0)$. Then,
\begin{equation}
\gamma_n( Z + r ) \geq  \gamma_n( \CC^{n-k} + r ) \qquad \qquad \text{for all} \ r > 0,  \label{eq_1608}
\end{equation}
where we think of $\CC^{n-k}$ as the subspace of $\CC^n$ of all vectors whose last $k$ coordinates vanish.
 \label{thm_1237_}
\end{corollary}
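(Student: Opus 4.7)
The corollary is a direct specialization of Theorem \ref{thm_1048}, so the plan is essentially to check that the hypotheses line up and then invoke the theorem; there is no real obstacle.

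First I would observe that the assumption $f(0) = 0$ forces $0 \in Z$, so $d(0, Z) = 0$. According to Theorem \ref{thm_1048}, I am therefore free to compare $\gamma_n(Z + r)$ with $\gamma_n(E + r)$ for any $(n-k)$-dimensional affine complex subspace $E \subseteq \CC^n$ satisfying $d(0, E) = 0$. Such a subspace necessarily contains the origin, and is thus a complex-linear subspace of complex dimension $n-k$.

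Next I would use the unitary invariance of the standard Gaussian measure $\gamma_n$ on $\CC^n$: for any two complex-linear subspaces of the same complex dimension there is a unitary map of $\CC^n$ carrying one to the other, and this map preserves both $\gamma_n$ and the Euclidean norm, hence also the $r$-neighborhood. In particular, $\gamma_n(E + r) = \gamma_n(\CC^{n-k} + r)$ where $\CC^{n-k} \subseteq \CC^n$ is the subspace in which the last $k$ coordinates vanish. Substituting this identity into the conclusion of Theorem \ref{thm_1048} yields (\ref{eq_1608}), completing the proof.

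The only step that requires any care at all is the unitary invariance reduction, but that is a standard one-line observation; the content of the corollary lies entirely in Theorem \ref{thm_1048}.
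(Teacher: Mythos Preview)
Your proposal is correct and matches the paper's own treatment: the paper simply states that Corollary~\ref{thm_1237_} is obtained ``by specializing Theorem~\ref{thm_1048} to the case where $0 \in Z$,'' and your argument spells out exactly that specialization together with the routine unitary-invariance observation needed to pass from a generic $(n-k)$-dimensional linear subspace $E$ to $\CC^{n-k}$.
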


We view Corollary \ref{thm_1237_} in the context of
Gromov's Gaussian waist inequality \cite{gromov}. If the function $f$ in Corollary \ref{thm_1237_} is assumed to be
merely continuous, then Gromov showed that there exists some $t \in \CC^k$ such that the set $Z = f^{-1}(t)$ satisfies (\ref{eq_1608}).
From Corollary \ref{thm_1237_} we thus gather  that when $f: \CC^n \rightarrow \CC^k$ is not only continuous but also holomorphic, it is always possible to select $t = f(0)$
in the Gaussian waist inequality.

\smallbreak Of course, the complex structure is irrelevant for Gromov's Gaussian waist inequality, which is usually formulated
for continuous maps $f: \RR^n \rightarrow \RR^k$.
With the exception of the case $k=1$,
very little is known about the special value $t$ for which the fiber $Z = f^{-1}(t)$ is large. A random choice
of $t$ is worthless in general, as explained by Alpert and Guth \cite{HG}. The idea that $t = f(0)$ should
be suitable for a holomorphic $f: \CC^n \rightarrow \CC^k$ is rather natural, in view of the following statement (see
Lelong \cite{Lelong}, Rutishauser \cite{Ruti} or e.g. Chirka \cite[Section 15.1]{Chirka}): Under the assumptions of Corollary \ref{thm_1237_}, the function
$$ r \mapsto \frac{Vol_{2(n-k)} (Z \cap r B^n)}{Vol_{2(n-k)}( r B^{2(n-k)} )} $$
is a non-decreasing function of $r > 0$, whose limit as $r \rightarrow 0^+$ is at least one.
Here $B^n = \{ z \in \CC^n \, ; \, |z| \leq 1 \}$ and $Vol_{2(n-k)}$ is the $2(n-k)$-dimensional
Hausdorff measure. We remark that this statement
may be used to show that $$ \liminf_{r \rightarrow 0^+} \left[ \gamma_n( Z + r ) / \gamma_n( \CC^{n-k} + r ) \right] \geq 1. $$
We thus obtain a rather direct proof of a limit case of Corollary \ref{thm_1237_}.
A corresponding limit case of Theorem \ref{thm_1048} may be deduced from the results of Brendle and Hung \cite{BH}
and Alexander, Hoffman and Osserman \cite{AHO},
even in the more general setting of a minimal submanifold. Still, the only proof of Theorem \ref{thm_1048} or Corollary \ref{thm_1237_}
of which we are aware is quite indirect and it involves Eldan's stochastic localization. It would be interesting to find a proof of Theorem \ref{thm_1048} which does not involve stochastic processes.

\smallbreak
We proceed with a non-Euclidean  waist inequality for scalar holomorphic functions $f: \CC^n \rightarrow \CC$ that follows from Theorem \ref{thm_1048}.
By a convex body we mean  a compact, convex set with a non-empty interior.
We say that a convex body $K \subseteq \CC^n$ is circled if $e^{i t} K = K$ for all $t \in \RR$. Here, $\lambda K = \{ \lambda z \, ; \, z \in K\}$, $A + B = \{ a + b \, ; \, a \in A, b \in B \}$ and $x + A = \{ x \} + A$.
Equivalently, a subset $K \subseteq \CC^n$ is a circled convex body if and only if $K$ is the unit ball of a complex norm on $\CC^n$.
Thus the following corollary is concerned with tubular neighborhoods with respect to an arbitrary complex norm on $\CC^n$.

\begin{corollary} Let $K \subseteq \CC^n$ be a circled convex body.
Then there exists a complex, $(n-1)$-dimensional, linear subspace $H \subseteq \CC^n$ with the following property:
Let  $f: \CC^n \rightarrow \CC$ be holomorphic, set $Z = f^{-1}(0)$ and assume that $Z$ is non-empty.
Then for any $r > 0$,
$$
\gamma_n( Z + r K ) \geq \gamma_n( H_1 + r K),
$$
where $H_1 \subseteq \CC^n$ is a translate of $H$ with $d(0, H_1) = d(0, Z)$.
 \label{thm_1237}
\end{corollary}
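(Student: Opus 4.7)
The plan is to reduce the claim to Theorem~\ref{thm_1048} with $k=1$ by choosing $H$ so that, in one distinguished direction, the $rK$-tube around any complex hyperplane parallel to $H$ coincides with an ordinary Euclidean tube. The key geometric input is that, since $K$ is circled, its projection onto any complex line $\CC u$ is a Euclidean disk of radius $h_K(u) := \sup_{z \in K} |\langle z, u\rangle|$: the projection is a convex subset of $\CC$ invariant under the rotations $w \mapsto e^{i\theta} w$, hence a disk centered at the origin.

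I would let $\rho > 0$ be the in-radius of $K$, i.e.\ the largest $\rho$ with $\rho B^n \subseteq K$, and pick a tangency point $p \in \partial(\rho B^n) \cap \partial K$; such a $p$ exists by the maximality of $\rho$. Setting $u := p/\rho \in \CC^n$, the ball $\rho B^n$ admits $\{z : \Re\langle z, u\rangle = \rho\}$ as its unique supporting hyperplane at $p$, and any supporting hyperplane of the larger body $K$ at $p$ must also support $\rho B^n \subseteq K$ there; by uniqueness the two coincide. Hence $K \subseteq \{z : \Re\langle z, u\rangle \leq \rho\}$, and so $h_K(u) = \rho$. I then define $H := \{z \in \CC^n \, : \, \langle z, u\rangle = 0\}$, a complex $(n-1)$-dimensional subspace.

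Next I would check that, for any translate $H_1 = cu + H$ of $H$,
$$ H_1 + rK \; = \; \{ z \in \CC^n \, : \, |\langle z, u\rangle - c| \leq r\rho \} \; = \; H_1 + r\rho B^n. $$
One inclusion is immediate from $|\langle v, u\rangle| \leq r \cdot h_K(u) = r\rho$ for $v \in rK$. The other uses the circled property of $K$: given $z$ with $|\langle z, u\rangle - c| \leq r\rho$, set $v := (\langle z, u\rangle - c)\, u$; then $|v| \leq r\rho$ so $v \in r\rho B^n \subseteq rK$, while $z - v \in H_1$. In particular $\gamma_n(H_1 + rK)$ depends only on $|c| = d(0, H_1)$, by the joint rotation invariance of $\gamma_n$ and of $rK$, so the quantity in the statement is unambiguous.

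The rest of the argument is direct. Since $\rho B^n \subseteq K$, one has $\gamma_n(Z + rK) \geq \gamma_n(Z + r\rho)$. Theorem~\ref{thm_1048} with $k = 1$, applied to $f$ and to $E := H_1$ (a translate of $H$ with $d(0, H_1) = d(0, Z)$), gives $\gamma_n(Z + r\rho) \geq \gamma_n(H_1 + r\rho)$; and the displayed identity shows that this equals $\gamma_n(H_1 + rK)$. The main obstacle is the initial geometric observation that, for circled $K$, the tangent direction to the inscribed Euclidean ball collapses the $rK$-neighborhood of the complex hyperplane $H_1$ into a genuine Euclidean tube; once that is in place, Theorem~\ref{thm_1048} does all the real work.
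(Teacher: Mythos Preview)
Your argument is correct and follows essentially the same route as the paper's proof: pick the in-radius $\rho=r_K$ and a contact point $p=z_0\in\partial K$ with $|z_0|=r_K$, set $H=z_0^{\perp}$, use the circled hypothesis to upgrade the supporting-hyperplane inequality $\Re\langle z,u\rangle\le\rho$ to $|\langle z,u\rangle|\le\rho$ (equivalently $K\subseteq H+\rho B^n$), and then sandwich via Theorem~\ref{thm_1048}. The only cosmetic difference is that you package the two inclusions $\rho B^n\subseteq K$ and $K\subseteq H+\rho B^n$ into the single set identity $H_1+rK=H_1+r\rho B^n$, whereas the paper applies them one at a time in the chain $\gamma_n(Z+rK)\ge\gamma_n(Z+r\rho B^n)\ge\gamma_n(H_1+r\rho B^n)\ge\gamma_n(H_1+rK)$.
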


Let us now discuss the proof of Theorem \ref{thm_1048} in the case where $0 \in Z$. By approximation, it suffices to consider the case where $0$ is a regular value of $f$,
thus $Z = f^{-1}(0) \subseteq \CC^n$
is an $(n-k)$-dimensional complex submanifold of $\CC^n$. We fix a probability space $(\Omega, \cF, \PP)$,
which for concreteness is set to be the unit interval $\Omega = [0,1]$ equipped with the Lebesgue measure. With $\PP$-almost any $\omega \in \Omega$ we will associate a Gaussian measure $\mu_{\omega}$ on $\CC^n$ such that
\begin{equation}  \gamma_n(A) = \int_{\Omega} \mu_{\omega}(A) d \PP(\omega) = \EE_{\omega} \mu_{\omega}(A) \qquad \text{for any measurable set} \ A \subseteq \CC^n. \label{eq_935}
\end{equation}
The Gaussian probability measure $\mu_{\omega}$ has several important properties: It is almost-surely centered at a certain point of
 $Z$, it is supported on a $k$-dimensional affine
subspace of $\CC^n$, and it is more curved than the standard Gaussian measure in $\CC^n$.
The measure decomposition in (\ref{eq_935}),
which is formulated precisely in Theorem \ref{thm_330} below, is the main technical result in this paper.

\smallbreak The decomposition (\ref{eq_935}) is obtained as the limit $t \rightarrow \infty$ of a stochastic process of decompositions parameterized by time.
Recall that a stochastic process is a family of random variables $(X_t)_{t \geq 0}$, i.e., a family of functions defined
on $\Omega$. All of our stochastic processes are continuous, meaning that for almost any $\omega \in \Omega$, the map $t \mapsto X_t(\omega)$ is continuous in $[0, \infty)$.
From now on in this paper, we will supress the dependence on the sample
point $\omega \in \Omega$, and replace integrals with respect to $\PP$ by the expectation sign $\EE$.

\smallbreak Let us briefly explain the relation of holomorphic functions to stochastic processes and in particular to It\^o martingales.
Recall the well-known fact, that $f(W_t)$ is a local martingale whenever $f$ is harmonic
and $(W_t)_{t \geq 0}$ is a Brownian motion.
When $f: \CC^n \rightarrow \CC$ is holomorphic (pluriharmonic is sufficient), the stochastic process $f(X_t)$
is a local martingale whenever $$ d X_t = \Sigma_t d W_t, $$ where $\Sigma_t \in \CC^{n \times n}$ is an arbitrary
adapted process. This provides much more flexibility, which allows us to control the center of the Gaussian measure $\mu_t$
and confine it to the manifold $Z$. The details are below.

\smallbreak For $z, w \in \CC^n$ we write $z \cdot w = \sum_j z^j w^j$. Note that in our notation
the vector $w$ is not conjugated. We use $| A |$ to denote the Hilbert-Schmidt norm of a matrix $A \in \CC^{n \times n}$,
and $Id$ is the identity matrix. For Hermitian matrices $A, B \in \CC^{n \times n}$, we write $A \leq B$ if the difference $B - A$ is positive semi-definite,
while $A > 0$ means that $A$ is positive-definite. We write $\log$ for the natural logarithm. By a smooth function we mean $C^{\infty}$-smooth.

\smallbreak
\emph{Acknowledgements.} I would like to thank Bo Berndtsson, Ronen Eldan, Sasha Logunov and Sasha Sodin for interesting related discussions.
Thanks also to the anonymous referee for comments simplifying the proof of  Lemma \ref{lem_955}.
Supported by a grant from the European Research Council (ERC).

\section{Eldan's stochastic localization}

Denote by $M_n^+(\CC)$ the collection of all Hermitian $n \times n$ matrices that are positive-definite.
Write $\cQ^n$ for the space of all quadratic polynomials $P: \CC^n \rightarrow \RR$ of the form
\begin{equation} p(z) = \frac{(z - a)^* B (z - a)}{2} - \log \det(B) \qquad \qquad \text{with} \ B \in M_n^+(\CC), a \in \CC^n.
\label{eq_929} \end{equation}
Here $z = (z^1,\ldots, z^n) \in \CC^n$ is actually viewed as a column vector, and $z^*$ is the row vector with entries
$\overline{z^1},\ldots, \overline{z^n}$. The representation (\ref{eq_929}) of  $p \in \cQ^n$ clearly
determines the vector $a_p = a$ and the positive-definite Hermitian matrix $B_p = B(p) = B$. Note that for any $p \in \cQ^n$,
\begin{equation}  \int_{\CC^n} e^{-p(z)} d \lambda(z) = \det(B) \int_{\CC^n} e^{-(z - a)^* B (z - a) / 2 } d \lambda(z) = \int_{\CC^n} e^{-|w|^2 / 2} d \lambda(w) = (2 \pi)^n,
\label{eq_941} \end{equation}
where $\lambda$ is the Lebesgue measure in $\CC^n$.

\smallbreak
Let $(W_t)_{t \geq 0}$ be a standard Brownian motion in $\CC^n$ with $W_0 = 0$ which is defined on our probability space $(\Omega, \cF, \PP)$. This means that $\Re(W_t)$ and $\Im(W_t)$ are two independent standard Brownian motions in $\RR^n$, so in particular $\EE |W_t|^2 = 2 nt$ for all $t \geq 0$. Recall that the quadratic variation of two continuous functions $\vphi, \psi: [0, t] \rightarrow \CC$ is
$$ [\vphi, \psi]_t = \lim_{\eps(P) \rightarrow 0} \sum_{i=1}^{N_P} (\vphi(t_i) - \vphi(t_{i-1})) \cdot (\psi(t_i) - \psi(t_{i-1}))  $$
whenever the limit exists, where $P = \{0 = t_0 < t_1 < \ldots < t_{N_P} = t \}$ is a partition of  $[0,t]$ into $N_P$ intervals and $\eps(P) = \max_{1 \leq i \leq N_P} |t_i - t_{i-1}|$
is the mesh of the partition.
Note that the quadratic variation of the Brownian motion is a deterministic function, namely
\begin{equation}  [W^j, W^k]_t = [\overline{W^j}, \overline{W^k}]_t = 0, \quad [W^j, \overline{W^k}]_t = 2 t \delta_{jk} \qquad (t > 0) \label{eq_306}
\end{equation}
where $W_t = (W_1^t, \ldots, W_n^t) \in \CC^n$ and $\delta_{jk}$ is Kronecker's delta.
The following proposition claims the existence a certain stochastic process, adapted to the filtration induced by the Brownian motion and attaining  values in
the space $\cQ^n$.

\begin{proposition}[``existence of the Eldan process''] Assume that $\Sigma: \cQ^n \rightarrow \CC^{n \times n}$ is a smooth matrix-valued map with $ |B_p^{1/2} \Sigma(p)| \leq 1$ for all $p \in \cQ^n$.
Then there exists a  $\cQ^n$-valued adapted stochastic process $(p_t)_{t \geq 0}$
such that the following hold:
\begin{enumerate}
\item[(i)] $\displaystyle p_0(z) = |z|^2 / 2$ for all $z \in \CC^n$.
\item[(ii)] For any measurable set $S \subseteq \CC^n$ and for any $t > 0$,
$$ \int_{S} e^{-p_0(z)} d \lambda(z) = \EE \int_{S} e^{-p_t(z)} d \lambda(z).  $$
\item[(iii)] Denote $a_t = a_{p_t}, B_t = B_{p_t}$ and $\Sigma_t = \Sigma(p_t)$. Then the following system of stochastic differential equations hold true in $t \in (0, +\infty)$:
\begin{equation} d a_t = \Sigma_t d W_t \qquad \text{and} \qquad B_t = B_t \Sigma_t \Sigma_t^* B_t dt.
\label{eq_826} \end{equation}
\end{enumerate} \label{prop_1345}
\end{proposition}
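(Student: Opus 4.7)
The plan is to construct $(p_t)_{t \geq 0}$ by viewing each $p_t \in \cQ^n$ through its parameters $(a_t, B_t) \in \CC^n \times M_n^+(\CC)$ as in (\ref{eq_929}), solving the coupled SDE of (iii) for this pair, and then verifying (ii) by showing that $e^{-p_t(z)}$ is a true martingale for each fixed $z \in \CC^n$. I would set $a_0 = 0$ and $B_0 = \id$, which makes $p_0(z) = |z|^2/2$, securing (i). I then read (iii) as the defining system
$$ da_t = \Sigma(p_t)\,dW_t, \qquad dB_t = B_t\, \Sigma(p_t) \Sigma(p_t)^* B_t\,dt, $$
whose coefficients are smooth, hence locally Lipschitz, on the open manifold $\CC^n \times M_n^+(\CC)$. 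Standard SDE theory then produces a unique $(\cF_t)$-adapted strong solution up to an explosion time $\tau$.

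Next I would show $\tau = +\infty$ almost surely. Since $B_t$ has no martingale part, the classical chain rule gives $dB_t^{-1} = -B_t^{-1}(dB_t)B_t^{-1} = -\Sigma_t \Sigma_t^*\,dt$, hence
$$ B_t^{-1} = \id - \int_0^t \Sigma_s \Sigma_s^*\,ds, $$
so in particular $B_t \geq \id$. To keep $B_t^{-1}$ away from singularity in finite time I would exploit the hypothesis $|B_s^{1/2}\Sigma_s| \leq 1$: writing $\tilde\Sigma_s = B_s^{1/2}\Sigma_s$, one has $\Sigma_s\Sigma_s^* = B_s^{-1/2}\tilde\Sigma_s\tilde\Sigma_s^* B_s^{-1/2} \leq B_s^{-1}$, so for every $v \in \CC^n$ the scalar $\phi_v(t) = v^* B_t^{-1} v$ satisfies $\phi_v'(t) \geq -\phi_v(t)$, and Gronwall delivers $B_t^{-1} \geq e^{-t}\id$, i.e.\ $B_t \leq e^t\id$. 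A short Itô estimate on $|a_t|^2$, whose drift $2\Tr(\Sigma_t\Sigma_t^*)$ is bounded by $2n$, rules out blow-up of $a_t$, and so $\tau = +\infty$.

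For (ii), by Fubini it suffices to prove that $F_t(z) := e^{-p_t(z)}$ is a true martingale for each fixed $z$; then $\EE F_t(z) = F_0(z) = e^{-|z|^2/2}$ integrated over $S$ yields (ii). Applying Itô's formula to $p_t(z) = \tfrac12 (z-a_t)^* B_t (z-a_t) - \log\det(B_t)$ and using (\ref{eq_306}), the Itô correction from $d[a_t, \overline{a_t}]$ within the quadratic form contributes $\Tr(B_t\Sigma_t\Sigma_t^*)\,dt$, which precisely cancels $d\log\det(B_t) = \Tr(B_t^{-1}dB_t) = \Tr(B_t\Sigma_t\Sigma_t^*)\,dt$. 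The remaining drift in $dp_t$ equals $\tfrac12 (z-a_t)^* B_t \Sigma_t\Sigma_t^* B_t (z-a_t)\,dt$, while a further direct computation gives $d[p,p]_t = (z-a_t)^* B_t \Sigma_t\Sigma_t^* B_t (z-a_t)\,dt$, exactly twice the drift. These two terms then cancel in $dF_t = -F_t\,dp_t + \tfrac12 F_t\,d[p,p]_t$, so $F_t(z)$ is a local martingale; the polynomial bound $\id \leq B_t \leq e^t\id$ makes its quadratic variation locally integrable on finite time intervals, promoting it to a true martingale.

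The hard part will be the drift-cancellation step: although a direct calculation, it must correctly handle the complex Brownian quadratic variations (\ref{eq_306}), the Hermitian structure of the quadratic form, and the logarithmic-determinant term simultaneously, and it is precisely the algebraic shape of the SDE for $B_t$ in (iii) that forces this cancellation. The non-explosion step, delicate at first sight, reduces to a one-dimensional Gronwall inequality once the identity $dB_t^{-1} = -\Sigma_t\Sigma_t^*\,dt$ is noted.
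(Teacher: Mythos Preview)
Your proposal is correct and follows essentially the same route as the paper: construct the SDE for $(a_t,B_t)$, prove non-explosion via a Gr\"onwall-type bound on $B_t$ together with a moment estimate on $a_t$, show via It\^o's formula that $e^{-p_t(z)}$ is a local martingale through exactly the drift cancellation you describe, promote it to a true martingale, and apply Fubini. The only differences are cosmetic: the paper bounds $\Tr B_t$ from above (obtaining $\Tr B_t \le n e^t$) rather than $B_t^{-1}$ from below as you do, and it promotes the local martingale to a true one via the simpler deterministic bound $e^{-p_t(z)} \le \det B_t \le n^n e^{nt}$ rather than via integrability of the quadratic variation.
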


\begin{proof} Equation (\ref{eq_826}) is a stochastic differential equation with smooth coefficients in $a$ and $B$.
According to the standard theory (e.g., Kallenberg \cite[Section 21]{Kal})
this equation has a unique strong solution for all $0 \leq t < T$ with the initial condition $B_0 = Id, a_0 = 0$, where the random variable $T \in [0, +\infty]$ is a stopping time which is almost-surely non-zero.
The stopping time $T$ is the explosion time of the process, in the sense that $T = \sup_{k \geq 1} T_k$ where
\begin{equation}  T_k = \inf \{ t \geq 0 \, ; \, |a_t| + | B_t | \geq k \} \qquad \qquad (k=1,2,\ldots). \label{eq_1702} \end{equation}
Let us  show that  $\PP(T = +\infty) = 1$.
We first explain why $B_t$ cannot blow up in finite time.
In fact, we will show that $B_t$ is bounded by a deterministic function of $t$.
For $k \geq 1$ and $t > 0$ we abbreviate $t_k = \min \{t, T_k \}$. We know that  $B_0 = Id$ and by (\ref{eq_826}),
\begin{equation}  B_{t_k} = Id + \int_0^{t_k}  B_s \Sigma_s \Sigma_s^* B_s ds \qquad \qquad (t > 0). \label{eq_1656}
\end{equation}
In particular, $B_{t_k} \geq Id$ for all $t > 0$. Since  $|B_t^{1/2} \Sigma_t| \leq 1$, we deduce from (\ref{eq_1656}) that
$$ \Tr [B_{t_k}] = n + \int_0^{t_k}  \Tr [ B_s^{1/2} (B_s^{1/2} \Sigma_s \Sigma_s^* B_s^{1/2}) B_s^{1/2} ] ds \leq n +  \int_0^{t_k} \Tr[B_s] ds, $$
where $\Tr[A]$ is the trace of the matrix $A \in \CC^{n \times n}$.
Thus the Gr\"onwall inequality (e.g., \cite[Lemma 21.4]{Kal})
implies that $\Tr[B_{t_k}] \leq n e^{t}$ for all $t > 0$ and $k \geq 1$. Since $B_{t_k} \geq Id$, the bound on the trace
shows that $B_t$ cannot blow up in finite time.
Next, the process $(a_t)_{t \geq 0}$ is not necessarily bounded by a deterministic function, yet for any $s \in (0,t_k)$
we have that $|\Sigma_s|^2 = \Tr[\Sigma_s \Sigma_s^* ] \leq \Tr[B_s \Sigma_s \Sigma_s^*] = |B_s^{1/2} \Sigma_s|^2 \leq 1$. Hence, by the It\^o isometry,
$$ \EE |a_{t_k}|^2 = 2 \EE \int_0^{t_k} |\Sigma_s|^2 ds \leq 2 t_k \leq 2 t.  $$
Now Doob's martingale inequality (e.g., \cite[Proposition 7.16]{Kal}) shows that $\PP( \sup_{0 \leq s \leq t_k} |a_{s}| > k/2) \longrightarrow 0$ as $k \rightarrow \infty$, for any fixed $t > 0$. Consequently,
$$ \lim_{k \rightarrow \infty} \PP(T_k < t) = 0 \qquad \text{for all} \ t > 0. $$
It follows that $ T = +\infty$ with probability one. We have thus shown that the $\cQ^n$-valued stochastic process $(p_t)_{t \geq 0}$ is well-defined
for all $t > 0$, and moreover (i) and (iii) hold true. It remains to prove (ii). We begin by deducing from (\ref{eq_826}) that
\begin{equation}  \frac{d \log \det(B_t)}{dt} = \Tr \left[ B_t^{-1} \frac{d B_t}{dt} \right] = \Tr[ \Sigma_t \Sigma_t^* B_t ] = \Tr[ \Sigma_t^*  B_t  \Sigma_t]. \label{eq_1420}
\end{equation}
Fix $z \in \CC^n$. Note that as $|B_t| \leq n e^t$ and $|\Sigma_t| \leq 1$, for any fixed $t > 0$,
\begin{equation}
 \EE \int_0^t (z - a_s)^* B_s \Sigma_s \Sigma_s^* B_s (z - a_s) ds \leq n^2 e^{2t} \cdot \EE  \int_0^t |z - a_s|^2 ds
 < \infty.
 \label{eq_1418_}
 \end{equation}
Hence the process $L_t = L_t(z) =  \Re \left \{  \int_0^t (z - a_s)^* B_s \Sigma_s d W_s \right \}$ is a martingale
whose quadratic variation process satisfies $d [L, L]_t =  (z - a_t)^* B_t \Sigma_t \Sigma_t^* B_t (z - a_t) dt$.
We claim that
\begin{equation}\label{eq_946}
d p_t(z) =  - d L_t + d [L, L]_t / 2 \qquad \qquad (t > 0).
\end{equation}
Indeed, for $j=1,\ldots,n$  write $\Sigma_t^j$ for the $j^{th}$ row of the matrix $\Sigma_t$. Then
by (\ref{eq_306}) and (\ref{eq_826}) the quadratic variation process $[a^j, \overline{a^k}]_t$ satisfies
\begin{equation}  d [a^j, \overline{a^k}]_t = 2 \Sigma_t^j (\Sigma_t^k)^*   dt. \label{eq_307} \end{equation}
Note that $[a^j,a^k]_t = [\overline{a^j}, \overline{a^k}]_t = 0$, and also that $B_t$ is a process of finite variation.
By It\^o's formula, for any fixed $z \in \CC^n$,
\begin{align}  \nonumber
 2 d p_t(z) & = ( z - a_t)^* d B_t (z - a_t) -  (d a_t)^* B_t (z - a_t) -
 (z -a_t)^* B_t d a_t \\ & - 2 d \log \det(B_t) + 2 \Tr[\Sigma^*_t B_t \Sigma_t ] dt. \label{eq_1447}
\end{align}
Now (\ref{eq_946}) follows from (\ref{eq_826}), (\ref{eq_1420}) and (\ref{eq_1447}). From (\ref{eq_946}) and the It\^o formula,
$$ d e^{-p_t(z)} = e^{-p_t(z)} \cdot d L_t. $$
Hence $(e^{-p_t(z)})_{t \geq 0}$ is a local martingale. Since $e^{-p_t(z)} \leq \det(B_t) \leq n^n e^{t n}$,
the non-negative process $(e^{-p_t(z)})_{t \geq 0}$ is in fact a martingale.
 By the martingale property, $\EE e^{-p_t(z)} = e^{-p_0(z)}$ for all $t$ and $z$.
In order to prove (ii), observe that for any $t > 0$,
$$ \EE \int_{S} e^{-p_t(z)} d \lambda(z) = \int_S \EE e^{-p_t(z)} d \lambda(z) = \int_S e^{-p_0(z)} d \lambda(z) < \infty. $$
The application of Fubini's theorem in the space $\CC^n \times \Omega$, where $\Omega$ is the probability space on which the Brownian motion $(W_t)_{t \geq 0}$ is defined,
is legitimate as the integrand is non-negative.
This completes the proof of the proposition.
\end{proof}

Our next lemma shows that with probability one,
the center $a_t$ of the probability density $(2 \pi)^{-n} e^{-p_t(z)}$ does not escape to infinity as $t \rightarrow \infty$,
and in fact almost-surely it  has a finite limit in $\CC^n$.

\begin{lemma} We work under the notation and assumptions of Proposition \ref{prop_1345}. Define
 $$ a_{\infty} := \lim_{t \rightarrow \infty} a_t \qquad \text{and} \qquad
A_{\infty} := \lim_{t \rightarrow \infty} 2 B_t^{-1}. $$
Then these limits are well-defined almost-everywhere in our probability space $(\Omega, \cF, \PP)$.
Moreover, the random point $a_{\infty} \in \CC^n$ satisfies $\EE |a_{\infty}|^2 \leq 2n$
while the random Hermitian matrix $A_{\infty}$ satisfies $0 \leq A_{\infty} \leq 2 \cdot Id$ almost-surely.  \label{lem_220}
\end{lemma}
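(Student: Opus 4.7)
\emph{Proof plan.} The key observation is that in (\ref{eq_826}) the equation for $B_t$ has no Brownian component, so $B_t^{-1}$ can be computed by ordinary calculus. First I would apply $d(B_t^{-1}) = - B_t^{-1} (dB_t) B_t^{-1}$ to (\ref{eq_826}), which yields
$$ d(B_t^{-1}) = - B_t^{-1} B_t \Sigma_t \Sigma_t^* B_t B_t^{-1}\, dt = - \Sigma_t \Sigma_t^*\, dt, $$
and hence the clean pathwise identity
$$ B_t^{-1} = Id - \int_0^t \Sigma_s \Sigma_s^*\, ds \qquad (t \geq 0). $$
Since Proposition \ref{prop_1345} guarantees $B_t \in M_n^+(\CC)$ for every $t$, the left-hand side is positive-definite, which forces the (Loewner-)monotone non-decreasing process $M_t := \int_0^t \Sigma_s \Sigma_s^*\, ds$ to satisfy $0 \leq M_t \leq Id$ uniformly in $t$. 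A bounded monotone sequence of positive semi-definite matrices converges, so $M_t \to M_\infty$ almost surely with $0 \leq M_\infty \leq Id$; consequently $A_t = 2 B_t^{-1} \to A_\infty = 2(Id - M_\infty)$ almost surely and $0 \leq A_\infty \leq 2 \cdot Id$. This handles the matrix half of the lemma.

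For the center $a_t$, the point is to upgrade the pointwise bound $|\Sigma_s|^2 \leq 1$, which only gave the growing estimate $\EE |a_t|^2 \leq 2t$ in Proposition \ref{prop_1345}, to an integrated bound that is uniform in $t$. Taking traces in the identity above gives
$$ \int_0^t |\Sigma_s|^2\, ds = \Tr[M_t] \leq n \qquad \text{almost surely, for every } t \geq 0. $$
Summing (\ref{eq_307}) over $j = k$ and taking expectations then produces
$$ \EE |a_t|^2 = \sum_{j=1}^n \EE [a^j, \overline{a^j}]_t = 2\, \EE \int_0^t |\Sigma_s|^2\, ds \leq 2n, $$
so $(a_t)_{t \geq 0}$ is a $\CC^n$-valued martingale which is uniformly bounded in $L^2$. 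Applying the $L^2$ martingale convergence theorem to the real and imaginary parts of each coordinate yields an almost-sure limit $a_\infty \in \CC^n$, and the estimate $\EE |a_\infty|^2 \leq 2n$ is inherited from the uniform $L^2$ bound via $L^2$ convergence (or Fatou's lemma).

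The only spot that asks for a moment of care is the absence of an It\^o correction in $d(B_t B_t^{-1}) = 0$, which is justified because (\ref{eq_826}) exhibits $B_t$ as a process of finite variation: its increment is an absolutely continuous drift with no Brownian part, so the ordinary product rule applies. Once that is in hand there is no real obstacle; the argument is driven entirely by the pathwise identity for $B_t^{-1}$, which produces both the upper bound on $A_\infty$ and the uniform $L^2$ bound on $a_t$ at once.
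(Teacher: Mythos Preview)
Your argument is correct and follows essentially the same route as the paper: differentiate $B_t^{-1}$ (a finite-variation process) to obtain $B_t^{-1} = Id - \int_0^t \Sigma_s \Sigma_s^*\,ds$, use positivity of $B_t^{-1}$ to bound the integral by $Id$, deduce the convergence and bounds for $A_\infty$, and then use the trace bound $\int_0^t |\Sigma_s|^2\,ds \leq n$ together with the $L^2$ martingale convergence theorem for $a_t$. The only cosmetic remark is that the symbol $M_t$ is used later in the paper for $B_t^{1/2}\Sigma_t\Sigma_t^* B_t^{1/2}$, so you may want to choose a different letter for $\int_0^t \Sigma_s\Sigma_s^*\,ds$.
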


\begin{proof} According to (\ref{eq_826}),
\begin{equation}  \frac{d B_t^{-1}}{dt} = -B_t^{-1} \frac{d B_t}{dt} B_t^{-1} = -B_t^{-1} (B_t \Sigma_t \Sigma_t^* B_t) B_t^{-1} = - \Sigma_t \Sigma_t^*. \label{eq_219}
\end{equation}
Recall also that $B_0 = Id$ with $B_t \geq Id$ for all $t$. Therefore, for any $t > 0$,
\begin{equation} \int_0^{t} \Sigma_s \Sigma_s^* ds = - \int_0^{t} \frac{d B_s^{-1}}{ds} ds = Id - B_t^{-1} \leq Id. \label{eq_1137}
\end{equation}
Since $d a_t = \Sigma_t d W_t$, we deduce from (\ref{eq_1137}) that  the quadratic variation processes of the martingales $\Re[a_t^j]$ and $\Im[a_t^j]$
are bounded by $2n$ for all $t$ and $j$. Hence, according to Doob's margingale convergence theorem (e.g., \cite[Chapter 7]{Kal}),
\begin{equation}  a_{\infty} = \lim_{t \rightarrow \infty} a_t \label{eq_1140} \end{equation}
is a well-defined random vector in $\CC^n$, where the convergence in (\ref{eq_1140}) is both almost-everywhere in $\Omega$ and in the sense of $L^2$. Moreover, by (\ref{eq_1137}),
\begin{equation*} \EE |a_{\infty}|^2 = 2 \EE \int_0^{\infty} |\Sigma_s|^2 ds \leq 2n.
\end{equation*}
Finally, the positive-definite Hermitian matrix $B_t^{-1}$ satisfies $B_0^{-1} = Id$ and $
d B_t^{-1} / dt \leq 0$, according to (\ref{eq_219}). Thus the limit $A_{\infty} = 2 \cdot \lim_{t \rightarrow \infty} B_t^{-1}$ exists almost-surely, and it satisfies $0 \leq A_{\infty} \leq 2 \cdot Id$.
\end{proof}

Recall that $A \in \CC^{n \times n}$ is an orthogonal projection if $A^2 = A = A^*$.
Write $G_{n,k}$ for the Grassmannian of all $k$-dimensional complex subspaces of $\CC^n$.
For a subspace $E \in G_{n,k}$ we denote by $\pi_E$  the orthogonal projection matrix
whose {\it kernel} equals $E$.
Given a Hermitian matrix $A \in \CC^{n \times n}$ we denote its eigenvalues by $\lambda_1(A) \leq \ldots \leq \lambda_n(A)$.
We write $\overline{M_n^+(\CC)}$ for the set of all Hermitian $n \times n$ matrices that are positive semi-definite.
Proposition \ref{prop_1345} states that
\begin{equation}  B_t = Id + \int_0^t B_s \Sigma_s \Sigma_s^* B_s ds \qquad \qquad (t \geq 0). \label{eq_1024}
\end{equation}
Since $t \mapsto B_t \Sigma_t \Sigma_t^* B_t$ is continuous, then with probability one, the map $t \mapsto B_t$  is continuously differentiable in $(0, \infty)$.
Set $M_t = B_t^{1/2} \Sigma_t \Sigma_t^* B_t^{1/2}$. Later on we will select a suitable map $\Sigma: \cQ^n \rightarrow \CC^{n \times n}$
such that with probability one, $B_t$ and $M_t$ will satisfy
the assumptions of the following standard, non-probabilistic lemma.

\begin{lemma}[``eigenvalue growth''] Let $0 \leq k \leq n-1, \eps > 0$ and let $B, M: [0, \infty) \rightarrow \overline{M_n^+(\CC)}$ be two functions, with $M$ continuous
and $B$ continuously differentiable. Assume that for all $t \geq 0$,
\begin{equation} \lambda_{k+1}(M_t) \geq \eps \qquad \text{and} \qquad \frac{d B_t}{dt} = B_t^{1/2} M_t B_t^{1/2} \label{eq_925} \end{equation}
where $M_t = M(t)$ and $B_t = B(t)$. Assume also that $B_0 = Id$.
Then for all $t > 0$,
\begin{equation}  \lambda_{k+1}(B_t) \geq \frac{\eps t}{k+1}. \label{eq_953} \end{equation}
\label{lem_955}
\end{lemma}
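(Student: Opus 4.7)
The plan is to bound from below the sum $\psi(t) := \sum_{i=1}^{k+1} \lambda_i(B_t)$ of the $k+1$ smallest eigenvalues, and then to recover the target bound by averaging: since $\lambda_{k+1}(B_t)$ is the maximum among $\lambda_1(B_t), \ldots, \lambda_{k+1}(B_t)$, one has $\lambda_{k+1}(B_t) \geq \psi(t)/(k+1)$. Accordingly it suffices to show $\psi(t) \geq (k+1) + \eps t$. The sum of the smallest $k+1$ eigenvalues of a Hermitian matrix $H$ admits the Ky Fan variational description
$$\sum_{i=1}^{k+1} \lambda_i(H) = \min_{V} \Tr(P_V H P_V),$$
the minimum running over complex $(k+1)$-dimensional subspaces $V \subseteq \CC^n$, with $P_V$ the orthogonal projection onto $V$. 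Any minimizer $V^* = V^*(t)$ must be $B_t$-invariant (it contains every eigenvector of $B_t$ with eigenvalue strictly below $\lambda_{k+1}(B_t)$, and is contained in the span of eigenvectors with eigenvalue at most $\lambda_{k+1}(B_t)$), and hence $P_{V^*}$ commutes with $B_t$ and with $B_t^{1/2}$.

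Eigenvalue sums are $1$-Lipschitz in the operator norm by Weyl's inequality, so $\psi$ is locally Lipschitz in $t$ and hence absolutely continuous. At any $t_0$ where $\psi'(t_0)$ exists I would pick any minimizer $V^* = V^*(t_0)$ and introduce the smooth auxiliary function $f(t) := \Tr(P_{V^*} B_t)$. By Ky Fan, $f(t) \geq \psi(t)$ for all $t$ with equality at $t = t_0$; thus $f - \psi$ attains its minimum at $t_0$ and its derivative there vanishes, giving
$$\psi'(t_0) = f'(t_0) = \Tr\bigl(P_{V^*} B_{t_0}^{1/2} M_{t_0} B_{t_0}^{1/2}\bigr).$$
Using the commutation $[P_{V^*}, B_{t_0}^{1/2}] = 0$, the right hand side equals $\Tr(M_{t_0} Y)$ where $Y := P_{V^*} B_{t_0} P_{V^*}$ is positive semi-definite with $n-k-1$ zero eigenvalues and $k+1$ further eigenvalues $\lambda_1(B_{t_0}), \ldots, \lambda_{k+1}(B_{t_0})$, each of them at least $1$ because $B_{t_0} \geq \id$.

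Finally I would invoke the classical rearrangement trace inequality: for positive semi-definite $X, Y \in \CC^{n \times n}$ with eigenvalues $\alpha_1 \leq \cdots \leq \alpha_n$ and $\beta_1 \leq \cdots \leq \beta_n$,
$$\Tr(XY) \;\geq\; \sum_{i=1}^{n} \alpha_i \beta_{n-i+1}.$$
Applied with $X = M_{t_0}$ and the above $Y$, every term is non-negative, and the $(k+1)$-st term by itself contributes $\lambda_{k+1}(M_{t_0}) \cdot \lambda_1(B_{t_0}) \geq \eps \cdot 1 = \eps$. Therefore $\psi'(t_0) \geq \eps$ almost everywhere; absolute continuity then gives $\psi(t) \geq \psi(0) + \eps t = (k+1) + \eps t$, and dividing by $k+1$ yields (\ref{eq_953}). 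The only delicate point is the lack of classical differentiability of $\psi$ at times where eigenvalues coalesce, but this is absorbed by working almost everywhere via the Lipschitz continuity of $\psi$.
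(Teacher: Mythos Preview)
Your proof is correct and follows the same overall strategy as the paper: both arguments lower-bound the Ky Fan sum $\psi(t)=\sum_{j=1}^{k+1}\lambda_j(B_t)$ by $(k+1)+\eps t$ and then divide by $k+1$. The difference is in the order of operations. The paper fixes an arbitrary $(k+1)$-dimensional test projection $\pi_F$ (independent of $t$), shows directly that
\[
\frac{d}{dt}\Tr[B_t\pi_F]=\Tr[B_t^{1/2}M_tB_t^{1/2}\pi_F]\geq\lambda_{k+1}(B_t^{1/2}M_tB_t^{1/2})\geq\lambda_{k+1}(M_t)\geq\eps,
\]
integrates, and only then specializes $F=F_t$ to the optimal choice. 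You instead take the infimum over $F$ first and differentiate the resulting nonsmooth function $\psi$ via an envelope argument, then exploit that the optimal $V^*(t_0)$ is $B_{t_0}$-invariant and finish with the rearrangement trace inequality.

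Both routes are short; the paper's ``differentiate first, optimize last'' ordering is slightly more elementary because it avoids the a.e.-differentiability and invariance discussions entirely, relying only on the congruence monotonicity $\lambda_{k+1}(C^*AC)\geq\lambda_{k+1}(A)$ for $C\geq Id$. Your route is perfectly valid and perhaps more conceptual, since it tracks the actual quantity of interest throughout; the invariance of the minimizing subspace and the rearrangement bound are both standard, and your handling of the nonsmoothness via Lipschitz continuity is correct (strictly speaking $\psi$ is $(k+1)$-Lipschitz rather than $1$-Lipschitz in the operator norm, but this is immaterial). You might also note explicitly that $B_t\geq Id$ follows from $B_t-Id=\int_0^t B_s^{1/2}M_sB_s^{1/2}\,ds\geq 0$, which you use when invoking $\lambda_1(B_{t_0})\geq 1$.
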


\begin{proof} We recall the min-max characterization of the eigenvalues of a Hermitian $n \times n$ matrix $A$:
\begin{equation}  \lambda_{k+1}(A) = \min_{E \in G_{n, k+1}} \max_{0 \neq v \in E} \frac{v^* A v}{|v|^2}. \label{eq_1723}
\end{equation}
Set $\ell = n-(k+1)$. For any $F \in G_{n, \ell}$ and $A \in \overline{M_n^+(\CC)}$,
\begin{equation} \Tr[A \pi_F] \geq \max_{0 \neq v \in F^{\perp}} \frac{v^* A v}{|v|^2} \geq \min_{E \in G_{n, k+1}} \max_{0 \neq v \in E} \frac{v^* A v}{|v|^2} = \lambda_{k+1}(A),
\label{eq_754} \end{equation}
where $F^{\perp} = \{ z \in \CC^n \, ; \, \forall w \in F, \ w^* z = 0 \}$ is the orthogonal complement.
It also follows from (\ref{eq_1723}) that for any $A, C \in M_n^+(\CC)$ with $C \geq Id$,
\begin{align} \lambda_{k+1}(C^* A C) & = \min_{E \in G_{n, k+1}} \max_{0 \neq v \in E} \frac{(C v)^* A (C v)}{|v|^2} \label{eq_756} \\ & \geq \min_{E \in G_{n, k+1}} \max_{0 \neq v \in E} \frac{(C v)^* A (C v)}{|C v|^2}
%\stackrel{``w = Cv"}
{=} \min_{F \in G_{n,k+1}} \max_{0 \neq w \in F} \frac{w^* A w}{|w|^2} = \lambda_{k+1}(A). \nonumber \end{align}
Note that (\ref{eq_925}) implies the inequality $B_t \geq B_0 = Id$ for all $t \geq 0$. Consequently, we deduce from (\ref{eq_754})
and (\ref{eq_756}) that for any $t \geq 0$ and $F \in G_{n,\ell}$,
\begin{equation}  \frac{d}{dt} \Tr[B_t \pi_F] =  \Tr[B_t^{1/2} M_t B_t^{1/2} \pi_F] \geq \lambda_{k+1}( B_t^{1/2} M_t B_t^{1/2} ) \geq \lambda_{k+1}(M_t) \geq \eps.
\label{eq_925_} \end{equation}
We integrate (\ref{eq_925_}) and obtain that for $t > 0$ and $F \in G_{n,\ell}$,
\begin{equation}
\Tr[B_t \pi_F] \geq \Tr[B_0 \pi_F] + \eps t = k+1 + \eps t.
\label{eq_926}
\end{equation}
In particular, (\ref{eq_926}) holds true for the subspace $F = F_t$ that is spanned by the eigenvectors
of $B_t$ that corresponds to the eigenvalues $\lambda_{k+2}(B_t), \ldots, \lambda_n(B_t)$. We conclude from (\ref{eq_926}) that
$$
k+1 + \eps t \leq \Tr[B_t \pi_{F_t}] = \Tr[\pi_{F_t} B_t \pi_{F_t}] = \sum_{j=1}^{k+1} \lambda_{j}(B_t) \leq (k+1) \lambda_{k+1}(B_t),
$$
completing the proof of (\ref{eq_953}).
\end{proof}

It is certainly possible to perform a more accurate analysis and improve the estimate of Lemma \ref{lem_955},
yet it is not needed here. Once we know that $\lambda_{k+1}(B_t)$ is large, the density $(2 \pi)^{-n} e^{-p_t(z)}$ is going to be
concentrated on a small neighborhood of a $k$-dimensional affine subspace in $\CC^n$.

\smallbreak A complex Gaussian probability measure in $\CC^n$, or a complex Gaussian in short, is a probability measure $\mu$
supported in a complex,  affine subspace $E \subseteq \CC^n$ with density in $E$ that is proportional to
\begin{equation}  z \mapsto e^{-(z-a)^* B (z-a)/2} \qquad \qquad (z \in E) \label{eq_229} \end{equation}
for some $a \in E$ and a positive-definite, Hermitian operator $B$. We say that $\mu$ is {\it more curved than the standard
Gaussian} if $$ B \geq Id. $$ Write $\cG^n$ for the collection of all complex Gaussians in $\CC^n$
equipped with the topology of weak convegrence, i.e., $\mu_m \longrightarrow \mu$ if
$$ \forall \vphi \in C(\CC^n), \qquad \int_{\CC^n} \vphi d \mu_m \stackrel{m \rightarrow \infty} \longrightarrow \int_{\CC^n} \vphi d \mu. $$
Here, $C(\CC^n)$ is the collection of all bounded, real-valued, continuous functions on $\CC^n$.
Note that for any quadratic polynomial $p \in \cQ^n$, we deduce from (\ref{eq_941}) that the measure
with density $(2 \pi)^{-n} e^{-p}$ belongs to $\cG^n$.
By a {\it random complex Gaussian} we mean a random variable
attaining values in $\cG^n$. For $\mu \in \cG^n$ and $j,k=1,\ldots,n$ we denote
$$ a_{\mu} := \int_{\CC^n} z d \mu, \qquad A_{\mu}^{j\bar{k}} = \int_{\CC^n} z^j \overline{z^k} d \mu(z) \, - \, \int_{\CC^n} z^j  d \mu(z) \int_{\CC^n} \overline{z^k}  d \mu(z).  $$
Thus $a_{\mu} \in \CC^n$ is the center of $\mu$ while $A_{\mu} = (A_{\mu}^{j\bar{k}})_{j,k=1,\ldots,n} \in \overline{M_n^+(\CC)}$ is the complex covariance matrix.
Clearly $\mu \in \cG^n$ is determined by $a_{\mu}$ and $A_{\mu}$, and in fact
under the representation (\ref{eq_229}) we have $$ A_{\mu} = 2 B^{-1} $$ in the case where $E = \CC^n$.
A standard argument shows that if $\mu_1,\mu_2,\ldots \in \cG^n$ satisfy
$$ a_{\mu_m} \stackrel{m \rightarrow \infty}\longrightarrow a \in \CC^n, \qquad A_{\mu_m} \stackrel{m \rightarrow \infty}{\longrightarrow} A \in
\overline{M_n^+(\CC)}, $$
then $\mu_m \longrightarrow \mu$  where $\mu$ is a complex Gaussian in $\CC^n$ with $a_{\mu} = a$ and $A_{\mu} = A$.

\begin{proposition} We work under the notation and assumptions of Proposition \ref{prop_1345}. Let $\eps > 0$ and assume that
with probability one,
\begin{equation} \forall t \geq 0, \qquad \lambda_{k+1}(M_t) \geq \eps, \label{eq_239} \end{equation}
where $M_t = B_t^{1/2} \Sigma_t \Sigma_t^* B_t^{1/2}$.
Denote by $\mu_t$ the measure on $\CC^n$ whose density is $z \mapsto (2 \pi)^{-n} e^{-p_t(z)}$.

\smallbreak Then $\mu_t \in \cG^n$ for all $t \geq 0$ and
$ \mu_{\infty} := \lim_{t \rightarrow \infty} \mu_t $
is a well-defined random complex Gaussian. Moreover, $\mu_{\infty}$ is supported in a $k$-dimensional, complex, affine subspace, it is more curved
than the standard Gaussian measure and
\begin{equation}  \forall \vphi \in L^{1}(\gamma_n), \qquad  \int_{\CC^n} \vphi d \gamma_n  = \EE \int_{\CC^n} \vphi d \mu_{\infty}, \label{eq_252}
\end{equation} \label{prop_317}
where $L^1(\gamma_n)$ is the collection of all $\gamma_n$-integrable, real valued functions on $\CC^n$.
\end{proposition}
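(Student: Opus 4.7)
The plan is to identify $\mu_t$ as an explicit complex Gaussian with center $a_t$ and complex covariance $2 B_t^{-1}$, and then pass these parameters to a limit as $t \to \infty$. The tool for convergence of the parameters is Lemma \ref{lem_220}, while the eigenvalue growth Lemma \ref{lem_955} forces the limiting covariance to degenerate in at least $n-k$ directions.

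First I would unpack the definitions: since $p_t \in \cQ^n$ has the form (\ref{eq_929}) with parameters $a_t$ and $B_t$, and since (\ref{eq_941}) supplies the normalization, $\mu_t$ is a genuine complex Gaussian in $\cG^n$ with $a_{\mu_t} = a_t$ and $A_{\mu_t} = 2 B_t^{-1}$; the bound $B_t \geq Id$ from the proof of Proposition \ref{prop_1345} further gives $A_{\mu_t} \leq 2 \cdot Id$. Next, Lemma \ref{lem_220} produces the almost-sure limits $a_\infty = \lim_{t \to \infty} a_t$ and $A_\infty = \lim_{t \to \infty} 2 B_t^{-1}$ with $0 \leq A_\infty \leq 2 \cdot Id$, so by the standard continuity of the parameterization recorded just before the proposition, $\mu_t \to \mu_\infty$ weakly almost-surely, where $\mu_\infty \in \cG^n$ has center $a_\infty$ and covariance $A_\infty$. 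The hypothesis (\ref{eq_239}) combined with Lemma \ref{lem_955} yields $\lambda_{k+1}(B_t) \geq \eps t /(k+1) \to \infty$, so $2 B_t^{-1}$ has at least $n-k$ eigenvalues tending to zero; hence the rank of $A_\infty$ is at most $k$ and $\mu_\infty$ is supported on an affine subspace of complex dimension at most $k$. On this support, the defining Hermitian form of $\mu_\infty$ is the inverse of $A_\infty / 2$ restricted to its range; since $A_\infty \leq 2 \cdot Id$, this inverse is $\geq Id$, so $\mu_\infty$ is more curved than the standard Gaussian.

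For the decomposition (\ref{eq_252}), I would rephrase Proposition \ref{prop_1345}(ii) as $\gamma_n(S) = \EE\, \mu_t(S)$ for every Borel $S$, whence $\int \vphi\, d\gamma_n = \EE \int \vphi\, d\mu_t$ for every bounded continuous $\vphi$. Almost-sure weak convergence $\mu_t \to \mu_\infty$ gives $\int \vphi\, d\mu_t \to \int \vphi\, d\mu_\infty$ pointwise in the sample, and bounded convergence with dominator $\|\vphi\|_{\infty}$ upgrades this to $\int \vphi\, d\gamma_n = \EE \int \vphi\, d\mu_\infty$ for every bounded continuous $\vphi$. The set function $S \mapsto \EE\, \mu_\infty(S)$ is a Borel measure by monotone convergence, it agrees with $\gamma_n$ on open sets (approximate $\II_O$ from below by bounded continuous functions and invoke monotone convergence on both sides), hence on all Borel sets; splitting $\vphi = \vphi^+ - \vphi^-$ and applying monotone convergence to each non-negative part extends (\ref{eq_252}) to arbitrary $\vphi \in L^1(\gamma_n)$. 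The main obstacle I anticipate is precisely this interchange of limit and expectation in the absence of global integrability --- $\mu_t$ may concentrate on a shrinking region of $\CC^n$, so a direct dominated convergence argument on $\int \vphi\, d\mu_t$ for unbounded $\vphi$ is unavailable --- which is why the argument must be routed through bounded continuous test functions and a monotone class style extension.
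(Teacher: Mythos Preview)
Your proposal is correct and follows essentially the same route as the paper: identify $\mu_t$ as the Gaussian with center $a_t$ and covariance $2B_t^{-1}$, invoke Lemma~\ref{lem_220} for convergence of the parameters, use Lemma~\ref{lem_955} with (\ref{eq_239}) to force $\mathrm{rank}(A_\infty)\le k$, and pass the decomposition identity through bounded continuous test functions via bounded convergence before extending to $L^1(\gamma_n)$. The only cosmetic difference is that the paper dispatches the extension in one line (``by approximation from below'') whereas you spell out a monotone-class style argument through open sets; both are valid.
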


\begin{proof} Since the quadratic polynomial $p_t$ belongs to $\cQ^n$, the measure
 $\mu_t$ belongs to $\cG^n$ for all $t$, according to (\ref{eq_941}). Moreover, by Lemma \ref{lem_220} we know that with probability one,
$$ a_t = a_{\mu_t} \stackrel{t \rightarrow \infty}{\longrightarrow} a_{\infty},
\qquad
 A_{\mu_t} = 2 B_t^{-1} \stackrel{t \rightarrow \infty}{\longrightarrow} A_{\infty}, $$
 where $a_{\infty} \in \CC^n$ and $A_{\infty} \in \overline{M_n^+(\CC)}$ are random variables with $A_{\infty} \leq 2 Id$.
Denote by $\mu_{\infty} \in \cG^n$ the random complex Gaussian with center $a_{\infty}$ and complex covariance matrix $A_{\infty}$.
Then almost-surely, \begin{equation}
 \mu_t \stackrel{t \rightarrow \infty} \longrightarrow \mu_{\infty}. \label{eq_955}
 \end{equation} Since $A_{\infty} \leq 2 Id$, with probability one the complex Gaussian
$\mu_{\infty}$ is more curved than the standard Gaussian measure in $\CC^n$. For any $\vphi \in C(\CC^n)$, it follows from
Proposition \ref{prop_1345}(ii) that
$$ \int_{\CC^n} \vphi d \gamma_n = (2 \pi)^{-n} \EE \int_{\CC^n} \vphi(z) e^{-p_t(z)} dz = \EE \int_{\CC^n} \vphi d \mu_t \stackrel{t \rightarrow \infty}\longrightarrow \EE \int_{\CC^n}
\vphi d \mu_{\infty}, $$
where we used (\ref{eq_955}) as well as the bounded convergence theorem in $(\Omega, \cF, \PP)$ in the last passage.
This proves (\ref{eq_252}) in the case where $\vphi \in C(\CC^n)$. By approximation from below,
we see that (\ref{eq_252}) holds true for all non-negative, $\gamma_n$-integrable functions. This implies
(\ref{eq_252}) is the general case. Next, thanks to our assumption (\ref{eq_239})
and to conclusion (iii) of Proposition \ref{prop_1345}, we may apply Lemma
\ref{lem_955} and obtain that
$$ \lambda_{k+1}(B_t) \geq \frac{\eps}{k+1} \cdot t \qquad \qquad (t \geq 0). $$
Therefore, as $A_{\mu_t} = 2 B_t^{-1}$,
$$ \lambda_{n-k} (A_{\infty}) = \lim_{t \rightarrow \infty} \lambda_{n-k}(A_{\mu_t}) =
\lim_{t \rightarrow \infty} \frac{2}{\lambda_{k+1}(B_t)} = 0. $$
Thus $A_{\infty}$ is a matrix whose rank is at most $k$, and with probability one, the random complex Gaussian $\mu_{\infty}$
is supported in a $k$-dimensional complex, affine
subspace of $\CC^n$.\end{proof}

\section{A martingale supported on the fiber}

We prove the following:

\begin{proposition} Let $1 \leq k \leq n$, let $Z \subseteq \CC^n$ be a closed set and let $U \subseteq \CC^n$ be an open set containing the origin.
Assume that $f: U \rightarrow \CC^k$ is holomorphic, with $f(0) = 0$, such that $Z = \{ z \in U \, ; \, f(z) = 0 \}$.
Assume also that $0$ is a regular value of $f$.

\smallbreak Then there exists a  $\cQ^n$-valued adapted stochastic process $(p_t)_{t \geq 0}$, satisfying (i), (ii) and (iii) from Proposition \ref{prop_1345}, and
moreover the following holds with probability one: For all $t > 0$ we have $$ a_t \in Z \qquad \text{and} \qquad \lambda_{k+1}(M_t) \geq c_{n}, $$
where $c_{n} = 1/n$ and where as before $M_t = B_t^{1/2} \Sigma_t \Sigma_t^* B_t^{1/2}$. \label{prop_1024}
\end{proposition}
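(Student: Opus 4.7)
I would specialize Proposition \ref{prop_1345} by a careful choice of $\Sigma: \cQ^n \to \CC^{n \times n}$ whose columns at $p$ lie in $\ker \partial f(a_p)$ whenever $a_p$ is near $Z$. Combined with the holomorphicity of $f$ and the vanishing of $[a^i,a^l]_t$, It\^o's formula will then force $f(a_t)$ to be a local martingale with identically vanishing quadratic variation, pinning $a_t$ to the zero fiber $Z$ for all $t$. The eigenvalue lower bound will be built into $\Sigma$ by arranging that $M_t$ is proportional to the orthogonal projection onto $B_t^{1/2}\ker\partial f(a_t)$, a subspace of the optimal dimension $n-k$.

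\textbf{Construction of $\Sigma$.} Since $0$ is a regular value, the set $V := \{z \in U : \operatorname{rank}\partial f(z) = k\}$ is open, contains $Z$, and on it the kernel $\ker \partial f(a)$ is a smoothly varying complex $(n-k)$-plane. For $p \in \cQ^n$ with $a_p \in V$, let $Q_p$ be the orthogonal projection onto $B_p^{1/2} \ker \partial f(a_p)$, and put $\tilde\Sigma(p) := (n-k)^{-1/2}\, B_p^{-1/2} Q_p$. Using $Q_p = Q_p^* = Q_p^2$ and Hermiticity of $B_p^{-1/2}$, a direct calculation gives
\[
M := B_p^{1/2} \tilde\Sigma(p)\, \tilde\Sigma(p)^* B_p^{1/2} = \tfrac{1}{n-k}\, Q_p,
\]
so $|B_p^{1/2}\tilde\Sigma(p)|^2 = \Tr(M) = 1$ and $\lambda_{k+1}(M) = \tfrac{1}{n-k} \geq \tfrac{1}{n}$; moreover $B_p^{-1/2} Q_p$ has image inside $\ker \partial f(a_p)$, hence $\partial f(a_p)\tilde\Sigma(p) = 0$. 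Pick a smooth cutoff $\chi: \CC^n \to [0,1]$ with $\chi \equiv 1$ on an open neighborhood $W_0$ of $Z$ and $\chi \equiv 0$ outside some open set whose closure sits in $V$, and set $\Sigma(p) := \chi(a_p)\,\tilde\Sigma(p)$, extended by $0$ where $\chi(a_p) = 0$. Then $\Sigma$ is smooth on $\cQ^n$ and $|B_p^{1/2}\Sigma(p)| \leq \chi(a_p) \leq 1$.

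\textbf{Trapping $a_t$ inside $Z$.} Apply Proposition \ref{prop_1345} with this $\Sigma$ to obtain a process $(p_t)_{t \geq 0}$ satisfying (i)--(iii). Since each $f^j$ is holomorphic one has $\partial^2 f^j/\partial z^i \partial \bar z^l \equiv 0$, and by (\ref{eq_307}) also $d[a^i,a^l]_t = 0$; It\^o's formula therefore reduces to
\[
df(a_t) = \partial f(a_t)\, \Sigma_t\, dW_t,
\]
a vector-valued local martingale with $d[f^i, \overline{f^j}]_t = 2\,[\partial f(a_t)\Sigma_t\Sigma_t^* \partial f(a_t)^*]_{ij}\,dt$, which vanishes identically wherever $\chi(a_t) > 0$. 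Define the stopping time $\tau := \inf\{t \geq 0 : a_t \notin W_0\}$. On $[0,\tau)$ the bracket of $f(a_t)$ vanishes, so $f(a_t) \equiv f(0) = 0$, i.e.\ $a_t \in Z$. By path-continuity $a_\tau \in Z \subseteq W_0$, which forces $\tau = +\infty$ almost surely. Consequently $a_t \in Z$ and $\chi(a_t) = 1$ for every $t > 0$, and on this event $M_t = \tfrac{1}{n-k}Q_t$ yields $\lambda_{k+1}(M_t) = \tfrac{1}{n-k} \geq c_n = \tfrac{1}{n}$, as required.

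\textbf{Main obstacle.} The delicate point is a mild circularity between (a) needing $\Sigma$ smooth on all of $\cQ^n$ so Proposition \ref{prop_1345} applies, (b) the algebraic identity $\partial f(a_p)\Sigma(p) = 0$ holding only where the cutoff $\chi$ is supported, and (c) the It\^o argument for $a_t \in Z$ requiring $a_t$ to remain inside $\{\chi > 0\}$. The circle closes because $Z$ sits inside the open set $\{\chi \equiv 1\}$, so any continuous trajectory exiting that set must first leave $Z$ -- which the holomorphicity of $f$, via the vanishing quadratic variation of $f(a_t)$, forbids.
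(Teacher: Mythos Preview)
Your proof is correct and takes essentially the same route as the paper's: your projection $Q_p$ onto $B_p^{1/2}\ker\partial f(a_p)$ coincides with the paper's $\pi_{\tilde H(p)}$ (whose kernel is $B_p^{-1/2}\operatorname{span}\{(\nabla f_j(a_p))^*\}$, i.e.\ the orthogonal complement of your image), your cutoff $\chi$ plays the role of the paper's $\theta$, and the stopping-time argument is the same up to whether one exits $W_0$ or $U$. The only difference is cosmetic---you normalize by $1/\sqrt{n-k}$ rather than the paper's $1/\sqrt{n}$, yielding the slightly sharper $\lambda_{k+1}(M_t)=1/(n-k)\ge 1/n$.
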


The proof requires some preparation. Fix $Z, U \subseteq \CC^n$ and $f = (f_1,\ldots, f_k): \CC^n \rightarrow \CC^k$ as in Proposition \ref{prop_1024}.
The value $0$ is a regular value of $f$, hence for any $z \in Z$, the Jacobian matrix \begin{equation} (\partial f_j / \partial z_\ell)_{1 \leq j \leq k,1 \leq \ell \leq n}
 \label{eq_1033} \end{equation} has rank $k$. By continuity, there exists an open set $\tilde{U} \subseteq U$ containing $Z$
 such that the Jacobian matrix in (\ref{eq_1033}) has rank $k$ throughout $\tilde{U}$. We may replace $U$ by the smaller set
 $\tilde{U}$ and assume from now on that the Jacobian matrix in (\ref{eq_1033}) has rank $k$ throughout $U$.
For a holomorphic function $g: \CC^n \rightarrow \CC$ we write $$ \nabla g = (\partial g / \partial z_1,\ldots, \partial g / \partial z_n)
$$ for the holomorphic gradient, viewed as a row vector.

\begin{lemma} There exists a smooth map $H: U \rightarrow G_{n,k}$
such that
\begin{equation}  \left( \nabla f_{j}(z) \right)^* \in H(z) \qquad \qquad (z \in U, j=1,\ldots,k). \label{eq_1143} \end{equation} \label{lem_1319}
\end{lemma}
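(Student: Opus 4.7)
The natural candidate is $H(z) := \mathrm{span}_\CC \bigl\{ (\nabla f_1(z))^*, \ldots, (\nabla f_k(z))^* \bigr\}$. Since the Jacobian matrix (\ref{eq_1033}) has rank $k$ everywhere on $U$, its $k$ rows are $\CC$-linearly independent at each $z \in U$; taking the conjugate transpose preserves linear independence, so the $k$ column vectors spanning $H(z)$ are linearly independent and $H(z)$ is a genuine $k$-dimensional complex subspace. The inclusion (\ref{eq_1143}) holds tautologically.

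The only remaining issue is to verify that $z \mapsto H(z)$ is smooth as a map into $G_{n,k}$. My plan is to use the standard realization of $G_{n,k}$ as a smooth submanifold of the space of Hermitian $n \times n$ matrices via the embedding $H \mapsto P_H$, where $P_H$ is the orthogonal projection \emph{onto} $H$ (rather than the projection $\pi_H$ with kernel $H$ used elsewhere in the paper; of course $P_H = \id - \pi_H$, so smoothness of one is equivalent to smoothness of the other). Form the $n \times k$ matrix $A(z)$ whose $j$-th column is $(\nabla f_j(z))^*$, with $(\ell, j)$-entry $\overline{\partial f_j / \partial z_\ell}(z)$. Because $f$ is holomorphic, $A$ is a smooth (though not holomorphic, due to the conjugation) function on $U$. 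By full rank, the $k \times k$ Hermitian matrix $A(z)^* A(z)$ is positive-definite throughout $U$, hence smoothly invertible. The projection formula
\[
P_{H(z)} \;=\; A(z) \bigl( A(z)^* A(z) \bigr)^{-1} A(z)^*
\]
then exhibits $z \mapsto P_{H(z)}$ as a smooth map into the Hermitian matrices, which in turn gives smoothness of $z \mapsto H(z)$ as a map into $G_{n,k}$.

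I do not foresee any substantive obstacle: the lemma is essentially the familiar fact that the column space of a smoothly varying, pointwise full-rank matrix-valued function depends smoothly on its argument. If one wishes to avoid invoking the Hermitian-matrix embedding of the Grassmannian, a purely local alternative is available: around any $z_0 \in U$ apply Gram--Schmidt to the frame $\{(\nabla f_j(z))^*\}_{j=1}^k$ to obtain a smooth orthonormal frame for $H(z)$ on a neighborhood of $z_0$, and read off smoothness directly from the standard charts on $G_{n,k}$. The projection-matrix approach is merely the cleaner of the two.
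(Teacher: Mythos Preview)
Your proof is correct and takes essentially the same approach as the paper: define $H(z)$ as the span of the $(\nabla f_j(z))^*$, use the rank-$k$ Jacobian to get the right dimension, and note smooth dependence. The paper simply asserts that $H(z)$ ``varies smoothly with $z \in U$'' without further justification, so your projection-formula argument actually supplies more detail than the original.
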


\begin{proof} For $z \in U$ write $H(z)$ for the complex subspace spanned by the vectors
$$ (\nabla f_{ 1}(z))^*,\ldots, (\nabla f_{k}(z))^* \in \CC^n. $$
Then $H(z)$ is a $k$-dimensional subspace for any $z \in U$, since the Jacobian matrix in (\ref{eq_1033}) has rank $k$.
Clearly the subspace $H(z) \in G_{n,k}$ varies smoothly with $z \in U$.
\end{proof}

Pick a smooth function $\theta: \CC^n \rightarrow [0,1]$ that is supported on $U$ with
\begin{equation}
 \theta(z) = 1 \qquad \qquad \text{for all} \ z \in Z. \label{eq_954}
\end{equation}
For a quadratic polynomial $p \in \cQ^n$ with $a_p \in U$ we set
$$ \tilde{H}(p) = B(p)^{-1/2} \cdot H(a_p) \in G_{n, k}, $$
where $H: U \rightarrow G_{n,k}$ is as in Lemma \ref{lem_1319}. Thus, we ``rotate'' the subspace $H(a_p)$ using the operator $B(p)^{-1/2}$
in order to obtain the subspace $\tilde{H}(p)$.
For a quadratic polynomial $p \in \cQ^n$ we define
\begin{equation}  \Sigma(p) := \frac{\theta(a_p)}{\sqrt{n}} \cdot B(p)^{-1/2} \cdot \pi_{\tilde{H}(p)}, \label{eq_515} \end{equation}
where we recall that $\pi_{\tilde{H}(p)}$ is the orthogonal projection matrix whose kernel is $\tilde{H}(p)$.
Then $\Sigma(p)$ varies smoothly with $p \in \cQ^n$, and moreover always
\begin{equation} |B_p^{1/2} \cdot \Sigma(p)| \leq 1. \label{eq_1430} \end{equation}
 It follows from Lemma \ref{lem_1319} that if $a_p \in U$ and $j=1,\ldots,k$,
then the vector $(\nabla f_{j}(a_p))^*$ belongs to the kernel of $\pi_{H(a_p)}$. Therefore
$B(p)^{-1/2} (\nabla f_{j}(a_p))^*$ belongs to the kernel of $\pi_{\tilde{H}(p)}$. Consequently, for all $p \in \cQ^n$, if $a_p \in U$ then
\begin{equation}   \nabla f_{j}(a_p) \, \Sigma(p) = 0 \qquad \text{for} \ j=1,\ldots,k. \label{eq_1454} \end{equation}
Thanks to (\ref{eq_1430}) we may apply Proposition \ref{prop_1345}, and conclude the existence of the $\cQ^n$-valued stochastic process $(p_t)_{t \geq 0}$. Recall that we abbreviate $a_t = a_{p_t}, B_t = B_{p_t}$ and $\Sigma_t = \Sigma(p_t)$.

\begin{proof}[Proof of Proposition \ref{prop_1024}]
Consider the following stopping time:
$$ T = \inf  \left \{ t > 0 \, ; \, a_t \not \in U \right \}. $$
Since $a_0 = 0 \in U$ and $U$ is open, the stopping time $T$ is almost-surely positive.
We claim that $a_t \in Z$ for all $t \in [0, T)$. Indeed, recall that $d a_t = \Sigma_t d W_t$. Therefore the quadratic variation $[a_t^j, a_t^\ell]$ vanishes for all $j,\ell=1,\ldots,n$.
This means that when applying the It\^o formula for $d f_{j}(a_t)$, there will be no quadratic variation term as $f_{j}$ is holomorphic.
Thus, for $j=1,\ldots,k$ and $0 < t < T$,
\begin{equation}
  d f_{j}(a_t) = \nabla f_{j}(a_t) \cdot d a_t = \nabla f_{j}(a_t) \cdot \Sigma_t dW_t = \nabla f_{j}(a_{p_t}) \Sigma(p_t) d W_t = 0,
\label{eq_1322} \end{equation}
according to (\ref{eq_1454}). For $t > 0$ abbreviate $t \wedge T = \min \{t, T \}$. It follows from (\ref{eq_1322}) that
\begin{equation}
 f_{j}(a_{t \wedge T}) = f_{j}(a_0) = f_{j}(0) = 0 \qquad \qquad \text{for all} \ t \geq 0, j=1,\ldots,k.
 \label{eq_1319} \end{equation}
 Since $Z = \{ z \in U \, ; \, \forall j\, f_j(z) = 0 \}$, from (\ref{eq_1319}) we deduce that
\begin{equation}
a_t \in Z \qquad \text{for all} \ t \in [0, T).
\label{eq_1359}
\end{equation}
Consider the stopping time
$$ S = \inf  \{ t > 0 \, ; \, a_t \not \in Z \}. $$
From (\ref{eq_1359}) we learn that $S \geq T$ with probability one.
However, since $Z$ is a closed set that is contained in the open set $U$, the continuity of the process $a_t$ implies
that $S < T$ whenever $T$ is finite. Therefore $T = S = +\infty$ almost-surely, and (\ref{eq_1359})
shows that $a_t \in Z$ for all $t > 0$. In particular, for any $t > 0$, by (\ref{eq_954}),
$$ \theta(a_t) = 1. $$
Consequently, for all $t > 0$,
\begin{equation}  B_t^{1/2} \Sigma_t =  \frac{\theta(a_t)}{\sqrt{n}} \pi_{\tilde{H}(p_t)} =  \frac{\pi_{\tilde{H}(p_t)}}{\sqrt{n}} \qquad (t > 0), \label{eq_1021} \end{equation}
where $\tilde{H}(p_t)$ is a certain  complex subspace of dimension $k$.
Since $M_t = B_t^{1/2} \Sigma_t \Sigma_t^* B_t^{1/2}$, we understand from (\ref{eq_1021}) that $M_t \in \overline{M_n^+(\CC)}$ and $\lambda_{k+1}(M_t) = 1/n$ for all $t$.
 This completes the proof of the proposition.\end{proof}

The decomposition of the Gaussian measure $\gamma_n$ alluded to in the Introduction is described in the following theorem:

\begin{theorem} Let $1 \leq k \leq n$, let $Z \subseteq \CC^n$ be a closed set and let $U \subseteq \CC^n$ be an open set containing the origin.
Assume that $f: U \rightarrow \CC^k$ is holomorphic, with $f(0) = 0$, such that $Z = \{ z \in U \, ; \, f(z) = 0 \}$
and such that $0$ is a regular value of $f$.
Then there exists a random complex Gaussian probability measure $\mu \in \cG^n$ with the following properties:
\begin{enumerate}
\item[(i)] With probability one, $\mu$ is more curved than the standard Gaussian measure and it is supported
in a $k$-dimensional complex, affine subspace of $\CC^n$.
\item[(ii)] For any $\vphi \in L^1(\gamma_n)$,
$$ \int_{\CC^n} \vphi d \gamma_n = \EE \int_{\CC^n} \vphi d \mu. $$
\item[(iii)] With probability one, the center $a_{\mu}$ satisfies $a_{\mu} \in Z$.
\end{enumerate} \label{thm_330}
\end{theorem}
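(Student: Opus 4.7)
The plan is to directly combine Proposition \ref{prop_1024} with Proposition \ref{prop_317} and read off the conclusions. Specifically, I apply Proposition \ref{prop_1024} to obtain a $\cQ^n$-valued adapted process $(p_t)_{t\geq 0}$ satisfying properties (i), (ii), (iii) of Proposition \ref{prop_1345}, with the additional almost-sure guarantees that $a_t \in Z$ and $\lambda_{k+1}(M_t) \geq 1/n$ for all $t > 0$. The lower bound on $\lambda_{k+1}(M_t)$ is precisely the hypothesis (\ref{eq_239}) of Proposition \ref{prop_317}, with $\eps = 1/n$.

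I then set $\mu := \mu_\infty = \lim_{t \to \infty} \mu_t$, where $\mu_t$ is the complex Gaussian with density $(2\pi)^{-n} e^{-p_t(z)}$. Proposition \ref{prop_317} directly gives properties (i) and (ii) of the theorem: $\mu$ is almost surely a complex Gaussian supported in a $k$-dimensional complex affine subspace and more curved than the standard Gaussian, and for all $\vphi \in L^1(\gamma_n)$ we have $\int \vphi\, d\gamma_n = \EE \int \vphi\, d\mu$.

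The only remaining item is property (iii), that $a_\mu \in Z$ almost surely. Here I use that $a_\mu$ is exactly $a_\infty = \lim_{t \to \infty} a_t$ (this is how $\mu_\infty$ was defined in the proof of Proposition \ref{prop_317}, via its center and covariance). By Proposition \ref{prop_1024}, almost surely $a_t \in Z$ for every $t > 0$, and by Lemma \ref{lem_220} the limit $a_\infty$ exists almost surely. Since $Z$ is closed, the almost-sure limit of points in $Z$ lies in $Z$, giving $a_\mu = a_\infty \in Z$ almost surely.

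There is no real obstacle here: the technical content has been packaged into Propositions \ref{prop_1024} and \ref{prop_317}, and the final theorem is essentially a bookkeeping step. The one small point to state carefully is the identification of $a_\mu$ with the limit $a_\infty$ of the process, which is immediate from the definition of $\mu_\infty$ as the complex Gaussian with center $a_\infty$ and covariance $A_\infty$, together with closedness of $Z$.
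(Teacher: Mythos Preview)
Your proposal is correct and matches the paper's own proof essentially line for line: apply Proposition~\ref{prop_1024} to verify the hypothesis of Proposition~\ref{prop_317}, set $\mu := \mu_\infty$, read off (i) and (ii) from Proposition~\ref{prop_317}, and deduce (iii) from $a_\mu = a_\infty = \lim_{t\to\infty} a_t \in Z$ using that $a_t \in Z$ for all $t>0$ and $Z$ is closed.
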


\begin{proof} In view of Proposition \ref{prop_1024} we may apply
 Proposition \ref{prop_317} and set $\mu := \mu_{\infty}$.
 Now properties (i) and (ii) follow from Proposition \ref{prop_317}
 while (iii) follows from Proposition \ref{prop_1024}  as
 $$ a_\mu = a_{\infty} = \lim_{t \rightarrow \infty} a_t \in Z $$
 since $a_t \in Z$ for all $t > 0$ and $Z$ is closed.
\end{proof}

\begin{remark}{\rm The point $a_{\mu} = a_{\infty}$ is a random vector supported in $Z$. By setting $\nu(A) = \PP(a_{\mu} \in A)$
for $A \subseteq \CC^n$ we thus obtain a certain probability measure $\nu$ supported on $Z$. Does this probability measure $\nu$ associated with $Z$ have any significance?
In the case where $Z$ is a linear subspace, the measure $\nu$ is a standard Gaussian in $Z$.
}\end{remark}

\begin{remark}{\rm The stochastic process $(a_t)_{t \geq 0}$ has three important properties: The point $a_t$ belongs
to $Z$ at all times, this process is a martingale with $d a_t = \Sigma_t dW_t$, and furthermore
$$ \int_0^{\infty} \Sigma_t^* \Sigma_t dt = Id - \lim_{t \rightarrow \infty} B_t^{-1} = Id - P $$
for some positive-definite matrix $P$ whose rank is the codimension of $Z$. These three properties suffice
for obtaining the conclusion of Theorem \ref{thm_330}.

\smallbreak What is the r\^ole  of the complex structure in our proof?
Can we construct a process with these three properties given a submanifold $Z \subseteq \RR^n$?
This question poses a challenge, even in the case where $Z$ is a minimal surface. A na\"{i}ve approach
towards the real case could be to replace the definition (\ref{eq_515}) of $\Sigma(p)$ by something of the form
$$   \Sigma(p) := \frac{\theta(a_p)}{\sqrt{n}} \cdot B(p)^{-1/2} \cdot \pi_{\tilde{H}(p)} \cdot S, $$
for a certain matrix $S$. The matrix $S$ needs  to satisfy two requirements: First, the It\^o term should vanish in order to ensure that $a_t \in Z$. This
is equivalent to the requirement that
\begin{equation}  \Tr \left[ S S^* \pi_{\tilde{H}(p)} B(p)^{-1/2} \nabla^2 f_j B(p)^{-1/2} \pi_{\tilde{H}(p)} \right] = 0 \qquad (j=1,\ldots,k), \label{eq_910_}
\end{equation}
where $\nabla^2 f_j$ is the Hessian of $f_j: \RR^n \rightarrow \RR$. Note that this requirement automatically holds in the complex case.
Second, some regularity is needed, maybe of the form $c Id \leq S S^* \leq C Id$ for some constants $c, C$ depending on $Z$.
The requirement (\ref{eq_910_}) amounts to linear constraints on the matrix $S S^*$. However, as $B(p_t)^{-1/2}$ is likely to be an almost-degenerate matrix for large $t$,
it is not entirely clear how to satisfy these linear constraints while keeping $S$ and $S^{-1}$ bounded.
}\end{remark}

\section{Complex waist inequalities}

For $v \in \CC^k$ and $R > 0$ write $D(v,R) = \{ z \in \CC^k \, ; \, |z - v| \leq R \}$.
We claim that for any $z_1, z_2 \in \CC^k$,
\begin{equation}
|z_1| \leq |z_2| \qquad \Longrightarrow \qquad \gamma_k(D(z_1,R)) \geq \gamma_k(D(z_2,R)). \label{eq_1130}
\end{equation}
Indeed, consider the function $\rho_R(z) = \gamma_k(D(z,R))$, defined for $z \in \CC^k$. This function is clearly a
radial function in $\CC^k$. By the Pr\'ekopa-Leindler inequality, the function $\rho_R$ is log-concave since it is the convolution of the Gaussian measure with
the characteristic function of a Euclidean ball (see, e.g., \cite[Corollary 1.4.2]{AGM}). Since $\rho_R$ is even and log-concave, it is
necessarily non-increasing on any ray emanating from the origin, proving (\ref{eq_1130}). We shall need the following lemma:

\begin{lemma} Let $\mu$ be a complex Gaussian measure in $\CC^k$ that is more curved
than the standard Gaussian $\gamma_k$. Then for any $v \in \CC^k$ and $R > 0$,
\begin{equation} \int_{D(a_{\mu},R)} e^{\Re(v^* z)} d \mu(z) \geq \gamma_k (D(v, R)) \cdot \int_{\CC^k} e^{\Re(v^* z)} d \mu(z). \label{eq_1157} \end{equation}
 \label{lem_1009}
\end{lemma}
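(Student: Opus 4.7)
My plan is to reduce the claim, via completion of the square, to a comparison between two Gaussian ball probabilities that follows from the hypothesis $B \geq Id$ applied twice.

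The first step is to complete the square. The tilted density $e^{\Re(v^* z)} d\mu(z)$ equals $C \cdot d\tilde\mu(z)$, where $\tilde\mu$ is a complex Gaussian probability measure with the same curvature matrix $B$ as $\mu$ but whose center is shifted to $a_\mu + B^{-1} v$, and where $C = e^{\Re(v^* a_\mu) + v^* B^{-1} v / 2}$ equals $\int_{\CC^k} e^{\Re(v^* z)} d\mu(z)$ (using that $v^* B^{-1} v \in \RR$ since $B^{-1}$ is Hermitian). Dividing the claim by $C$ reduces it to $\tilde\mu(D(a_\mu, R)) \geq \gamma_k(D(v, R))$; translating and using the $z \mapsto -z$ symmetry of the centered Gaussian $\gamma_B$ with density proportional to $e^{-z^* B z / 2}$, this becomes the cleaner statement
\[
\gamma_B(D(B^{-1} v, R)) \geq \gamma_k(D(v, R)).
\]

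Next I would realize $\gamma_B$ as the pushforward of $\gamma_k$ under the linear map $X \mapsto B^{-1/2} X$. Using the factorization $B^{-1} v = B^{-1/2}(B^{-1/2} v)$, the left-hand side becomes $\PP(|B^{-1/2}(X - B^{-1/2} v)| \leq R)$ with $X \sim \gamma_k$. The hypothesis $B \geq Id$ (equivalently $B^{-1} \leq Id$) now enters in two different ways. First, the pointwise contraction $|B^{-1/2} w|^2 = w^* B^{-1} w \leq |w|^2$ for every $w \in \CC^k$ gives the set inclusion $\{|X - B^{-1/2} v| \leq R\} \subseteq \{|B^{-1/2}(X - B^{-1/2} v)| \leq R\}$, and therefore
\[
\gamma_B(D(B^{-1} v, R)) \geq \gamma_k(D(B^{-1/2} v, R)).
\]
Second, $|B^{-1/2} v|^2 = v^* B^{-1} v \leq |v|^2$, so the Pr\'ekopa--Leindler monotonicity (\ref{eq_1130}) yields $\gamma_k(D(B^{-1/2} v, R)) \geq \gamma_k(D(v, R))$. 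Chaining the two inequalities completes the proof.

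I do not foresee a real obstacle: the argument is algebraic once the problem is recast as a Gaussian ball probability comparison, and both uses of $B \geq Id$ are entirely elementary. The only small subtlety is the case where $\mu$ is supported on a proper complex-affine subspace $E + a_\mu$ of $\CC^k$; here one runs the same argument inside the supporting subspace (the tilt $e^{\Re(v^* z)}$ on $E + a_\mu$ depends on $v$ only through its projection $v_E$ onto $E$) and then compares with $\gamma_k(D(v, R))$ through the Fubini decomposition $\gamma_k = \gamma_E \otimes \gamma_{E^\perp}$, which only decreases the right-hand side.
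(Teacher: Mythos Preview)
Your argument is correct and follows essentially the same route as the paper: complete the square to pass to the tilted Gaussian, realize it as a pushforward of $\gamma_k$ by $B^{-1/2}$, then use $B \geq Id$ once as a pointwise contraction and once via (\ref{eq_1130}). The only notable difference is in the degenerate case: the paper dispatches it by convolving $\mu$ with a small Gaussian and passing to the limit, whereas you argue directly inside the supporting subspace and then use the Fubini decomposition $\gamma_k = \gamma_E \otimes \gamma_{E^\perp}$ to compare with the full-dimensional right-hand side; both are valid.
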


\begin{proof} Translating, we may assume that $a_{\mu} = 0$.
 Let $\nu$ be the probability measure on $\CC^n$ whose density with respect to $\mu$ is proportional to the function $z \mapsto e^{\Re(v^* z)}$. Our goal is to prove that
\begin{equation}\label{eq_1138}
  \nu( D(0, R) ) \geq \gamma_k( D(v, R) ).
\end{equation}
Assume first that the support of $\mu$ spans the entire space $\CC^k$. Then for some $B \in M_n^+(\CC)$, the density of $\mu$ with respect to
the Lebesgue measure is
$$ \frac{d \mu}{d\lambda}(z) = (2 \pi)^{-n} \det(B) \cdot e^{-z^* B z / 2} \qquad \qquad (z \in \CC^n). $$
Since $\mu$ is more curved than the standard Gaussian, necessarily $B \geq Id$. Set $u = B^{-1} v$. Note that
$$ \frac{d \nu}{d\lambda}(z) = (2 \pi)^{-n} \det(B) \cdot e^{-(z - u)^* B (z - u) / 2}. $$
Hence $\nu$ is the push-forward of the standard Gaussian measure $\gamma_k$ under the affine map
$$ T(z) =  u + B^{-1/2} z = B^{-1} v + B^{-1/2} z \qquad \qquad (z \in \CC^n). $$
 Note that $T^{-1}(z) = -B^{-1/2} v + B^{1/2} z$.
Since $B^{1/2} \geq Id$, then $B^{1/2} (D(0,R)) \supseteq D(0, R)$, and hence
$$ T^{-1}(D(0, R)) = -B^{-1/2} v + B^{1/2}(D(0, R)) \supseteq -B^{-1/2} v + D(0, R) = D( -B^{-1/2} v, R). $$
Since $B^{-1/2} \leq Id$ we know that $|B^{-1/2} v| \leq |v|$. We may now use (\ref{eq_1130}) to obtain
$$ \nu( D(0, R) ) = \gamma_k ( T^{-1}( D(0, R) ) ) \geq  \gamma_k( D( -B^{-1/2} v, R) ) \geq \gamma_k(D(v,R)), $$
and (\ref{eq_1138}) is proven. This completes the proof of (\ref{eq_1157}) in the case where the support of $\mu$ spans $\CC^k$.
The general case follows by an approximation argument (e.g., convolve $\mu$ with a small Gaussian).
\end{proof}

\begin{proposition}
Let $1 \leq k \leq n$, let $Z_1 \subseteq \CC^n$ be a non-empty closed set and let $U_1 \subseteq \CC^n$ be an open set.
Suppose  that $f_1: U_1 \rightarrow \CC^k$ is holomorphic and that $Z_1 = \{ z \in U_1 \, ; \, f_1(z) = 0 \}$.
Assume also that $0$ is a regular value of $f_1$. Then for any $R \geq 0$ and $v \in \CC^k$ with $|v| \geq d(0, Z_1)$,
\begin{equation}  \gamma_n( Z_1 + R ) \geq \gamma_k(D(v, R) ). \label{eq_1019}
\end{equation}
\label{prop_1115}\end{proposition}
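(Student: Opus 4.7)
The plan is to reduce to Theorem \ref{thm_330} by translating to a nearest point of $Z_1$. Since $Z_1$ is closed and nonempty, a nearest point $z_0 \in Z_1$ with $|z_0| = d(0, Z_1)$ exists by compactness, and since $\gamma_k(D(v, R))$ depends only on $|v|$ and is non-increasing in $|v|$ by (\ref{eq_1130}), it suffices to treat the case $|v| = |z_0|$. I then translate, setting $\tilde U = U_1 - z_0$, $\tilde Z = Z_1 - z_0$, and $\tilde f(w) = f_1(w + z_0)$; these satisfy the hypotheses of Theorem \ref{thm_330}, which produces a random complex Gaussian $\tilde\mu \in \cG^n$ with $a_{\tilde\mu} \in \tilde Z$ almost surely, supported in a $k$-dimensional complex affine subspace $E_{\tilde\mu} = a_{\tilde\mu} + L$, and more curved than $\gamma_n$.

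Changing variables $w = u + z_0$ in the Gaussian density, together with Theorem \ref{thm_330}(ii) applied to the $\gamma_n$-integrable function $\vphi(u) = \mathbf{1}_{\tilde Z + R}(u)\, e^{-\Re(z_0^* u)}$, yields
$$ \gamma_n(Z_1 + R) = e^{-|z_0|^2/2}\, \EE \int_{\tilde Z + R} e^{-\Re(z_0^* u)}\, d\tilde\mu(u). $$
Since $a_{\tilde\mu} \in \tilde Z$, I bound $\tilde Z + R \supseteq D(a_{\tilde\mu}, R)$ and restrict to $E_{\tilde\mu}$. Identifying $L$ with $\CC^k$ via an isometry $T$, the recentered $\tilde\mu$ pulls back to a complex Gaussian $\mu'$ on $\CC^k$ centered at the origin and more curved than $\gamma_k$; the set $D(a_{\tilde\mu}, R) \cap E_{\tilde\mu}$ corresponds to $D(0, R)$, while $\Re(z_0^* u) = \Re(z_0^* a_{\tilde\mu}) + \Re(v'^* \zeta)$ with $v' = T^* z_0 \in \CC^k$ satisfying $|v'| \leq |z_0| = |v|$. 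Applying Lemma \ref{lem_1009} to $\mu'$ with $-v'$ in place of $v$, using (\ref{eq_1130}) to replace $\gamma_k(D(v', R))$ by $\gamma_k(D(v, R))$, and computing the Gaussian moment generating function of $\mu'$ then gives
$$ \int_{D(0,R)} e^{-\Re(v'^*\zeta)}\, d\mu'(\zeta) \geq \gamma_k(D(v, R))\, e^{z_0^* A_{\tilde\mu} z_0 / 4}, $$
where $A_{\tilde\mu}$ is the complex covariance matrix of $\tilde\mu$.

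Assembling these inequalities produces
$$ \gamma_n(Z_1 + R) \geq \gamma_k(D(v, R))\, e^{-|z_0|^2/2}\, \EE\, e^{-\Re(z_0^* a_{\tilde\mu}) + z_0^* A_{\tilde\mu} z_0 / 4}, $$
and the proof closes via the key identity
$$ \EE\, e^{-\Re(z_0^* a_{\tilde\mu}) + z_0^* A_{\tilde\mu} z_0 / 4} = e^{|z_0|^2/2}, $$
obtained from Theorem \ref{thm_330}(ii) applied to $\vphi(z) = e^{-\Re(z_0^* z)}$: the left side of (ii) is $\int e^{-\Re(z_0^* z)}\, d\gamma_n = e^{|z_0|^2/2}$, while the right side equals the expectation of the complex Gaussian MGF of $\tilde\mu$ at $-z_0$, which is exactly the quantity above. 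The main obstacle I anticipate is the exact cancellation of random exponential factors: the covariance factor produced by Lemma \ref{lem_1009} must coincide precisely with the covariance factor in the MGF of $\tilde\mu$, and it is this exact matching that allows the stochastic localization argument to yield a sharp bound rather than a merely qualitative one.
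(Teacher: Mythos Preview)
Your proposal is correct and follows essentially the same approach as the paper: translate to a nearest point, apply Theorem~\ref{thm_330}, use the exponential tilt $e^{-\Re(z_0^* z)}$ together with Lemma~\ref{lem_1009}, and close via Theorem~\ref{thm_330}(ii) applied to $\vphi(z)=e^{-\Re(z_0^* z)}$. The only differences are cosmetic: you change variables $w=u+z_0$ at the outset rather than at the end, and you make the identification of the $k$-dimensional support with $\CC^k$ and the Gaussian MGF computation explicit (your ``key identity'' is exactly the paper's second use of Theorem~\ref{thm_330}(ii), with the integral $\int e^{-\Re(z_0^* z)}\,d\tilde\mu$ evaluated).
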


\begin{proof} The set $Z_1$ is closed and non-empty, hence there exists $z_1 \in Z_1$ with $|z_1| = d(0, Z_1)$.
Set $f(z) = f(z + z_1)$ and $Z = Z_1 - z_1, U = U_1 - z_1$. Thus $Z = \{ z \in U \, ; \, f(z) = 0 \}$,
the set $Z$ contains the origin, and $0$ is a regular value of $f$. We may therefore apply Theorem \ref{thm_330} and conclude that
there exists a certain random complex Gaussian $\mu$ with
\begin{equation} \int_{\CC^n} \vphi d \gamma_n = \EE \int_{\CC^n} \vphi d \mu \qquad \text{for all} \ \vphi \in L^1(\gamma_n)). \label{eq_1325} \end{equation}
In particular, it follows from (\ref{eq_1325}) that
\begin{equation}   \int_{Z + R} e^{-\Re(z_1^* z)} d \gamma_n(z) = \EE \int_{Z + R} e^{-\Re(z_1^* z)} d \mu(z) \geq \EE \int_{D(a_{\mu}, R)} e^{-\Re(z_1^* z)} d \mu(z),
\label{eq_1007} \end{equation}
where we used the fact that $\PP( a_{\mu} \in Z) = 1$ in the last passage, as follows from Theorem \ref{thm_330}(iii).
According to Theorem \ref{thm_330}(i), with probability one the complex Gaussian $\mu$ is supported in a $k$-dimensional,
complex affine subspace, and it is more curved than the standard Gaussian. By  Lemma \ref{lem_1009} and (\ref{eq_1130}),
\begin{equation}  \EE \int_{D(a_{\mu}, R)} e^{-\Re(z_1^* z)} d \mu(z) \geq \gamma_k (D(v, R)) \cdot \EE \int_{\CC^n} e^{-\Re(z_1^* z)} d \mu(z), \label{eq_1015}
\end{equation}
where we used that $|v| \geq |z_1|$. Let us now use (\ref{eq_1325}) with $\vphi(z) = e^{-\Re(z_1^* z)}$. From (\ref{eq_1007}) and (\ref{eq_1015}) we conclude that
\begin{equation}  \int_{Z + R} e^{-\Re(z_1^* z)} d \gamma_n(z) \geq \gamma_k (D(v, R)) \cdot \int_{\CC^n} e^{-\Re(z_1^* z)} d \gamma_n(z). \label{eq_1020}
\end{equation}
We now change variables $w = z + z_1$ in the integrals in (\ref{eq_1020}). We obtain that
$$ \int_{Z_1 + R} e^{-\Re(z_1^* w) - |w - z_1|^2/2} d \lambda(w)  \geq \gamma_k (D(v, R)) \cdot \int_{\CC^n} e^{-\Re(z_1^* w) - |w - z_1|^2/2} d \lambda(w), $$
or equivalently,
$$ \int_{Z_1 + R} e^{-|w|^2/2} d \lambda(w)  \geq \gamma_k (D(v, R)) \cdot \int_{\CC^n} e^{-|w|^2/2} d \lambda(w). $$
The desired inequality (\ref{eq_1019}) is thus proven.
\end{proof}

The conclusion of Proposition \ref{prop_1115}  is stable under {\it upper Hausdorff limits} in $\CC^n$.
We say that a closed set $Z \subseteq \CC^n$ contains the upper Hausdorff limit of a sequence of closed sets $Z_1,Z_2,\ldots \subseteq \CC^n$  if for
any $R, \delta > 0$ there exists $N \geq 1$
with
\begin{equation}
\forall m \geq N, \qquad D(0, R) \cap Z_m \subseteq Z + \delta.
\label{eq_825} \end{equation}

\begin{lemma} Let $1 \leq k \leq n-1$ and let $K \subseteq \CC^n$ be a
convex body.
Assume that the closed set $Z \subseteq \CC^n$
contains the upper Hausdorff limit of a sequence $Z_1,Z_2,\ldots$
of closed sets in $\CC^n$. Then,
 $$ \gamma_n( Z + K ) \geq \limsup_{m \rightarrow \infty} \gamma_n(Z_m + K). $$
\label{cor_621}
\end{lemma}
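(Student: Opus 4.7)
The plan is a standard truncation-and-approximation argument exploiting compactness of $K$ together with the upper Hausdorff limit hypothesis.

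First I would fix $\eps > 0$ and choose $R > 0$ large enough that $\gamma_n(\CC^n \setminus D(0,R)) < \eps$, where here I use $D(0,R)$ to denote the Euclidean ball of radius $R$ in $\CC^n$. Since $K$ is a convex body, and in particular compact, I can also fix $\rho > 0$ with $K \subseteq D(0,\rho)$. Next, for arbitrary $\delta > 0$, the upper Hausdorff limit assumption (\ref{eq_825}) applied with radius $R+\rho$ yields an integer $N$ such that $D(0, R+\rho) \cap Z_m \subseteq Z + \delta$ for every $m \geq N$.

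The key geometric observation is that
$$ (Z_m + K) \cap D(0,R) \ \subseteq \ Z + K + D(0,\delta) \qquad (m \geq N). $$
Indeed, if $z \in (Z_m + K) \cap D(0,R)$, write $z = w + u$ with $w \in Z_m$ and $u \in K$. Then $|w| \leq |z| + |u| \leq R + \rho$, so $w \in D(0,R+\rho) \cap Z_m \subseteq Z + \delta$, and hence $z = w + u \in Z + D(0,\delta) + K$. From this inclusion and the choice of $R$ I obtain
$$ \gamma_n(Z_m + K) \leq \gamma_n(Z + K + D(0,\delta)) + \eps \qquad (m \geq N), $$
so that $\limsup_{m\to\infty} \gamma_n(Z_m + K) \leq \gamma_n(Z + K + D(0,\delta)) + \eps$.

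The final step is to let $\delta \to 0^+$ inside the closed, decreasing family of sets $\{Z + K + D(0,\delta)\}_{\delta > 0}$ and invoke continuity of measure from above. For this I must verify that $\bigcap_{\delta > 0} (Z + K + D(0,\delta)) = Z + K$. Any point $x$ in the intersection can be written $x = z_m + u_m + d_m$ for each $m \in \NN$, with $z_m \in Z$, $u_m \in K$ and $|d_m| \leq 1/m$; compactness of $K$ lets me pass to a subsequence with $u_m \to u \in K$, and then $z_m \to x - u$, which lies in $Z$ since $Z$ is closed. Hence $x \in Z + K$. This is the only place where the compactness of $K$ (rather than mere convexity) is really used, and it is the main, though minor, obstacle. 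Once this is established, $\gamma_n(Z + K + D(0,\delta)) \to \gamma_n(Z+K)$ as $\delta \to 0^+$, giving $\limsup_m \gamma_n(Z_m + K) \leq \gamma_n(Z+K) + \eps$; letting $\eps \to 0^+$ completes the proof.
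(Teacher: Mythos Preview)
Your proof is correct and follows essentially the same truncation-and-approximation strategy as the paper. The only cosmetic differences are that the paper intersects $Z_m$ (rather than $Z_m+K$) with a large ball and handles the step $\bigcap_{\delta>0}(Z+K+D(0,\delta))=Z+K$ by simply noting that $Z+K$ is closed (which is the standard consequence of $Z$ closed and $K$ compact), whereas you verify this closure explicitly.
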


\begin{proof} Set $L = \limsup_{m \rightarrow \infty} \gamma_n(Z_m + K)$. It suffices to prove that for any fixed  $\eps >0$,
\begin{equation} \gamma_n( Z + K)
\geq L - \eps.
\label{eq_914} \end{equation}
Denote $r = \sup_{z \in K} |z| < \infty$.
In order to prove (\ref{eq_914}) we pick a large number $R > r$ such that \begin{equation}
\gamma_n( \CC^n \setminus D(0, R-r) ) \leq \eps.  \label{eq_921} \end{equation}
Then for any $\delta > 0$, according to  (\ref{eq_825}),
\begin{align*}  \gamma_n( \left[ Z + \delta \right] + K)
& \geq \limsup_{m \rightarrow \infty} \gamma_n\left( \left[ D(0, R)  \cap Z_m \right] + K \right)
\\ & \geq- \eps + \limsup_{m \rightarrow \infty} \gamma_n\left(Z_m + K \right)
= L - \eps. \end{align*}
The set $Z + K$ is closed.
Since $\gamma_n$ is a probability   measure,
$$ \gamma_n( Z + K) = \gamma_n \left( \bigcap_{\delta > 0} [Z + K + \delta] \right) =
\lim_{\delta \rightarrow 0^+} \gamma_n \left( \left[ Z + \delta \right] + K \right)
\geq L - \eps, $$
completing the proof of (\ref{eq_914}).
\end{proof}

Assume that
the sequence of continuous functions $f_1,f_2,\ldots: \CC^n \rightarrow \CC^k$ converges to a limit function $f: \CC^n \rightarrow \CC^k$ uniformly on  compacts in $\CC^n$. Then $Z = f^{-1}(0)$ contains
the upper Hausdorff  limit
of the sequence $(Z_m)_{m \geq 1}$, where  $Z_m = f_m^{-1}(0)$. For a short argument see, e.g.,
the proof of Theorem 4.2 in \cite{K_waist}.

\begin{proof}[Proof of Theorem \ref{thm_1048}] There exists a point $p = (p_1,\ldots,p_n) \in Z$ with $|p| = d(0, Z)$.
For $\delta > 0$ denote
$$ g_{\delta}(z) = f(z) + \delta (z_1 - p_1,\ldots,z_k - p_k) \in \CC^k, \qquad (z \in \CC^n). $$
For all but finitely many values of $\delta$, the point $p$ is a regular point of the smooth map $g_{\delta}: \CC^n \rightarrow \CC^k$.
Pick a sequence $\delta_m \searrow 0$ such that $p$ is a always a regular point of $g_{\delta_m}$.
Then for each $m$, the image $g_{\delta_m}( D(p, 1/m) )$ contains an open neighborhood of the origin in $\CC^k$.
By Sard's theorem,
for any $m$ we may find $z_m \in D(p, 1/m) \subseteq \CC^n$ such that $g_{\delta_m}(z_m)$ is a regular value
of $g_{\delta_m}$ with $|g_{\delta_m}(z_m)| \leq 1/m$. Define,
$$
h_m(z) = g_{\delta_m}(z + z_m - p) - g_{\delta_m}(z_m) \qquad \qquad (z \in \CC^n, m \geq 1). $$
Then $0 = h_m(p)$ is a regular value of the holomorphic function $h_m: \CC^n \rightarrow \CC^k$. Moreover, $h_m \longrightarrow f$ as $m \rightarrow \infty$ uniformly on compacts in $\CC^n$. By the remark preceding the proof,
the set $Z = f^{-1}(0)$ contains the upper Hausdorff limit of $(Z_m)_{m \geq 1}$, where $Z_m = h_m^{-1}(0)$. Moreover, $p \in Z_m$ and hence for any $m \geq 1$,
\begin{equation}  d(0, Z_m) \leq |p|. \label{eq_623} \end{equation}
Let us now apply
 Proposition \ref{prop_1115} for the non-empty closed set $Z_m$, the open set $U = \CC^n$ and the holomorphic function $h_m: U \rightarrow \CC^k$ for which $0$ is a regular value.
 By (\ref{eq_623}) and the conclusion of the proposition, for any $m \geq 1$ and $R > 0$,
\begin{equation}
\gamma_n( Z_m + R ) \geq \gamma_k(D(v, R))) = \gamma_n(E + R),
\label{eq_601}
\end{equation}
where $v \in \CC^k$ is any point with $|v| = |p|$. Now
(\ref{eq_1608}) follows from (\ref{eq_601}) and Lemma \ref{cor_621}, completing the proof.
\end{proof}

\begin{proof}[Proof of Corollary \ref{thm_1237}]  Set $r_K = \inf \{ |z| \, ; \, z \in \partial K \}$.
The infimum is attained by compactness, hence
there exists $z_0 \in \partial K$ such that $ |z_0| =  r_K$. Clearly,
\begin{equation}
r_K B^n \subseteq K. \label{eq_437}
\end{equation}
Since the origin belongs to the interior of $K$, necessarily $z_0 \neq 0$. Setting $H = z_0^{\perp}
= \{ z \in \CC^n \, ; \, z_0^* z = 0 \}$, we claim that
\begin{equation}
K \subseteq H + r_K B^n. \label{eq_430}
\end{equation}
Indeed, the Euclidean sphere of radius $r_K$ centered at the origin is tangent to $\partial K$ at the point $z_0$. By convexity, $K$ lies to one side of the (real) tangent hyperplane to that sphere at the point $z_0$. Equivalently,
$$ \sup_{z \in K} \Re (z_0^* z) = z_0^* z_0 = |z_0|^2 = r_K^2. $$
Since $K$ is circled, then $|z_0^* z| \leq r_K^2$ for all $z \in K$ and (\ref{eq_430}) follows. In particular, $K + H \subseteq H + r_K B^n$.
Let $f: \CC^n \rightarrow \CC$ be a holomorphic function, set $Z = f^{-1}(0)$ and assume that $Z \neq \emptyset$. Let $t \in \RR$ be defined via
$$ d(0, t z_0 + H) = d(0, Z). $$
We now apply the inclusion (\ref{eq_437}) and Theorem \ref{thm_1048} in order to obtain that for any $r > 0$,
$$  \gamma_n(Z + r K) \geq \gamma_n(Z + r r_K B^n) \geq \gamma_n(t z_0 + H + r r_K B^n) \geq \gamma_n(t z_0 + H + rK), $$
where we also used (\ref{eq_430}). The desired conclusion follows with $H_1 = t z_0 + H$.
\end{proof}

We end this paper with a real analog of Corollary \ref{thm_1237}. With a slight abuse of notation, in the next corollary we write $\gamma_n$
for the standard Gaussian measure on $\RR^n$ whose density is $x \mapsto (2\pi)^{-n/2} \exp(-|x|^2/2)$.

\begin{corollary} Let $K \subseteq \RR^n$ be a convex body with $K = -K$. Then there exists an $(n-1)$-dimensional
subspace $H \subseteq \RR^n$ with the following property: For any continuous function $f: \RR^n \rightarrow \RR$
there exists $t \in \RR$ such that $L = f^{-1}(t)$ satisfies
$$ \gamma_n(L + r K) \geq \gamma_n(H + r K) \qquad \text{for all} \ r > 0. $$
\end{corollary}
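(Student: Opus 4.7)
The plan is to transcribe the proof of Corollary \ref{thm_1237} to the real setting, with Gromov's Gaussian waist inequality \cite{gromov} in codimension $k=1$ playing the r\^ole of Theorem \ref{thm_1048}. The central symmetry $K = -K$ will play the same r\^ole that the circled property played in the complex corollary. The key point is that Gromov's Gaussian waist theorem is known unconditionally for continuous real-valued $f: \RR^n \to \RR$, so we do not need the holomorphic hypothesis here.

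First I would define the subspace. Set $r_K = \inf \{ |x| \, ; \, x \in \partial K\}$. By compactness the infimum is attained at some $x_0 \in \partial K$ with $|x_0| = r_K > 0$, and clearly $r_K B^n \subseteq K$. Take $H = x_0^{\perp} \subseteq \RR^n$, the proposed $(n-1)$-dimensional subspace. The tangent-hyperplane argument in the proof of Corollary \ref{thm_1237} carries over verbatim: convexity of $K$ forces $\sup_{x \in K} \langle x_0, x \rangle = r_K^2$, and the assumption $K = -K$ then yields $|\langle x_0, x\rangle| \leq r_K^2$ for every $x \in K$. Writing an arbitrary $x \in \RR^n$ as $x = h + s x_0/|x_0|$ with $h \in H$ and $s \in \RR$, one has $\text{dist}(x, H) = |s| = |\langle x_0,x\rangle|/r_K$, so $|\langle x_0, x\rangle| \leq r_K^2$ is precisely the statement $K \subseteq H + r_K B^n$.

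Next, given a continuous $f: \RR^n \to \RR$, I would invoke Gromov's Gaussian waist theorem in codimension one to produce $t \in \RR$ such that $L = f^{-1}(t)$ satisfies
\begin{equation*}
\gamma_n(L + s B^n) \; \geq \; \gamma_n(H + s B^n) \qquad \text{for every } s > 0.
\end{equation*}
Finally, chain the sandwich $r_K B^n \subseteq K \subseteq H + r_K B^n$ with this waist bound to obtain
\begin{equation*}
\gamma_n(L + rK) \; \geq \; \gamma_n(L + r r_K B^n) \; \geq \; \gamma_n(H + r r_K B^n) \; \geq \; \gamma_n(H + rK),
\end{equation*}
where the first inequality is monotonicity together with $r r_K B^n \subseteq rK$, the second is the waist bound with $s = r r_K$, and the third uses $rK \subseteq H + r r_K B^n$ (from the slab containment) together with $H + H = H$, which gives $H + rK \subseteq H + r r_K B^n$.

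The main (and essentially only) obstacle is the appeal to Gromov's Gaussian waist inequality for $k=1$; once that is invoked as a black box, the argument is a direct real translation of the chain of inclusions used in Corollary \ref{thm_1237}, with no need for any of the stochastic-localization machinery developed earlier in the paper.
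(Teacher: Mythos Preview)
Your proposal is correct and follows essentially the same approach as the paper's own proof: define $r_K$ and $x_0 \in \partial K$ with $|x_0| = r_K$, set $H = x_0^{\perp}$, establish the sandwich $r_K B^n \subseteq K \subseteq H + r_K B^n$, invoke Gromov's Gaussian waist inequality in codimension one, and chain the three inequalities. The paper's proof is slightly more terse (it simply asserts the slab containment $K \subseteq H + r_K B^n$), whereas you spell out the tangent-hyperplane argument and the use of $K = -K$; but the logic is identical.
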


\begin{proof} Set $r_K = \inf \{ |x| \, ; \, x \in \partial K \}$, and let $x_0 \in \partial K$ be such that $|x_0| = r_K$.
Then $K \supseteq r_K B^n$ while for $H = x_0^{\perp} \subseteq \RR^n$ we have $K \subseteq H + r_K B^n$.
According to Gromov's Gaussian waist inequality (\cite{gromov}, see also \cite{K_waist}),
for any continuous function $f: \RR^n \rightarrow \RR$
there exists $t \in \RR$ such that $L = f^{-1}(t)$ satisfies
\begin{equation*}
\gamma_n(L + r K)  \geq \gamma_n(L + r \cdot r_K B^n)  \geq \gamma_n(H + r \cdot r_K B^n) \geq \gamma_n(H + r K). \tag*{\qedhere}
\end{equation*}
\end{proof}

\begin{remark}{\rm The requirement that $Z = f^{-1}(0)$ in Theorem \ref{thm_1048} may probably be replaced by the requirement that $Z \subseteq \CC^n$ is an irreducible complex-analytic variety of dimension $n-k$. One way to prove this is to solve the stochastic differential equations
in (\ref{eq_826}) under the apriori assumption that $a_t$ is confined to $Z$. That is, if $Z$ is smooth then  we may define $a_t = \Phi(b_t)$ where $\Phi$ is an embedding map
and $b_t$ is a stochastic process in an abstract complex manifold $X$. In the general case, the complex variety $X$ is smooth outside a singular set of smaller dimension, which with probability
one is avoided by the process $b_t$.
}\end{remark}

{
}

\smallbreak
\noindent Department of Mathematics, Weizmann Institute of Science, Rehovot 76100 Israel, and
School of Mathematical Sciences, Tel Aviv University, Tel Aviv 69978 Israel.

\smallbreak
\hfill \verb"boaz.klartag@weizmann.ac.il"

\end{document}